\title{Homotopy invariance of 4-manifold decompositions: connected sums}
\author{Qayum Khan}
\ead{qkhan@indiana.edu}
\address{Department of Mathematics, University of Notre Dame, Notre Dame, IN 46556 U.S.A.}
\DeclareMathAlphabet{\matheurm}{U}{eur}{m}{n}
\newtheorem{thm}{Theorem}[section]
\newtheorem{cor}[thm]{Corollary}
\newtheorem{lem}[thm]{Lemma}
\theoremstyle{definition}
\newtheorem{defn}[thm]{Definition}
\newtheorem{rem}[thm]{Remark}
\newtheorem{exm}[thm]{Example}
\newcommand{\CP}{\mathbb{CP}}
\newcommand{\E}{\mathbb{E}}
\newcommand{\bbH}{\mathbb{H}}
\newcommand{\bL}{\mathbb{L}}
\newcommand{\R}{\mathbb{R}}
\newcommand{\RP}{\mathbb{RP}}
\newcommand{\Z}{\mathbb{Z}}
\newcommand{\cN}{\mathcal{N}}
\newcommand{\cS}{\mathcal{S}}
\newcommand{\DIFF}{\mathrm{DIFF}}
\newcommand{\TOP}{\mathrm{TOP}}
\newcommand{\infdec}{{-\infty}}
\newcommand{\cdim}{\mathrm{cdim}}
\newcommand{\Cok}{\mathrm{Cok}}
\newcommand{\hofiber}{\mathrm{hofiber}}
\newcommand{\id}{\mathrm{id}}
\newcommand{\intr}{\mathrm{int}}
\newcommand{\Isom}{\mathrm{Isom}}
\newcommand{\Ker}{\mathrm{Ker}}
\newcommand{\Nil}{\mathrm{Nil}}
\newcommand{\rel}{~\mathrm{rel}~}
\newcommand{\UNil}{\mathrm{UNil}}
\newcommand{\Wh}{\mathrm{Wh}}
\newcommand{\bdry}{\partial}
\newcommand{\longra}{\longrightarrow}
\newcommand{\ol}[1]{\overline{#1}}
\newcommand{\xra}[1]{\xrightarrow{#1}}
\begin{document}

\begin{abstract}
Given any homotopy equivalence $f: M \to X_1 \# \cdots \# X_n$ of closed orientable 4-manifolds, where each fundamental group $\pi_1(X_i)$ satisfies Freedman's Null Disc Lemma, we show that $M$ is topologically $h$-cobordant to a connected sum $M' = M'_1 \# \cdots \# M'_n$ such that $f$ is $h$-bordant to some $f'_1 \# \cdots \# f'_n$ with each $f'_i: M'_i \to X_i$ a homotopy equivalence. Moreover, such a replacement $M'$ of $M$ is unique up to a connected sum of $h$-cobordisms. In summary, the existence and uniqueness, up to $h$-cobordism, of connected sum decompositions of such orientable 4-manifolds $M$ is an invariant of homotopy equivalence.

Also we establish that the Borel Conjecture is true in dimension 4, up to $s$-cobordism, if the fundamental group satisfies the Farrell--Jones Conjecture.
\end{abstract}

\begin{keyword}
4-manifold \sep connected sum \sep homotopy equivalence \sep $h$-cobordism \sep Borel Conjecture
\end{keyword}

\maketitle
\section{Introduction}

\subsection{Homotopy invariance of connected sums---stable version}

For simplicity, we begin with the stable version of our main result (Theorem~\ref{thm:main}).  This version follows easily from a recent algebraic calculation of $\UNil$ for free products of groups by F.~Connolly and J.~Davis \cite{CD2} and from an earlier development of stable geometric topology by S.~Cappell and J.~Shaneson \cite{CS1, Cappell_split}.

\begin{thm}\label{thm:main_stable}
Let $X$ be a compact connected orientable topological manifold of dimension 4.
Suppose the fundamental group $\pi_1(X)$ is a free product of groups $\Gamma_1, \ldots, \Gamma_n$. Then there exist compact connected topological 4-manifolds $X_1, \ldots, X_n$ with each fundamental group $\pi_1(X_i)$ isomorphic to $\Gamma_i$  such that there is a bijection between $(S^2 \times S^2)$-stable $h$-structure sets:
\[
\#: \prod_{i=1}^n \ol{\cS}_\TOP^h(X_i) \longra \ol{\cS}_\TOP^h(X).
\]
Moreover, these $X_i$ are unique up to $(S^2 \times S^2)$-stabilization and re-ordering.
\end{thm}

\begin{proof}
By the stable prime decomposition of Kreck--L\"uck--Teichner \cite{KLT1}, there exist 4-manifolds $X_i$, unique up to stabilization and permutation, with fundamental groups $\Gamma_i$ such that $X$ is $(S^2 \times S^2)$-stably homeomorphic to $X_1 \# \cdots \# X_n$. By theorems of Waldhausen \cite{Waldhausen_Rings} and Connolly--Davis \cite{CD2}, the algebraic $K$- and $L$-theory splitting obstruction groups associated to each connecting 3-sphere vanish:
\[
\widetilde{\Nil}_0=0 \;\text{ and }\; \UNil_5^h = 0.
\]
Therefore, by the equivalence of Theorem~\ref{thm:generalized_weinberger}(2), using Cappell's high-dimensional splitting theorem \cite{Cappell_free, Cappell_split}, we obtain inductively that $\#$ is a bijection.
\end{proof}

\subsection{Homotopy invariance of connected sums---unstable version}

Our Main Theorem (Thm.~\ref{thm:main}) is phrased technically in terms of the classes $NDL$ and $SES^h_+$, which we define below.  The difficulty in the proof is in observing new extensions of the geometric topology developed by S.~Cappell \cite{Cappell_split} and S.~Weinberger \cite{Weinberger_fibering}.

\begin{defn}[Freedman]
A discrete group $G$ is \textbf{$NDL$ (or good)} if the $\pi_1$-Null Disc Lemma holds for it (see \cite{FreedmanTeichner} for details). The class $NDL$ is closed under the operations of forming subgroups, extensions, and filtered colimits.
\end{defn}

This class contains subexponential and exponential growth \cite{FQ, FreedmanTeichner, KQ}.

\begin{thm}[Freedman--Quinn, Freedman--Teichner, Krushkal--Quinn]
The class $NDL$ contains all virtually polycyclic groups and all groups of subexponential growth.
\end{thm}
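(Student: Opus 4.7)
The plan is to reduce to a small list of base cases and then exploit the closure of $NDL$ under subgroups, extensions, and filtered colimits asserted in the preceding definition. For base cases I would use three facts: (i) the trivial group belongs to $NDL$ by Freedman's disk embedding theorem in the simply connected setting; and (ii) the infinite cyclic group $\Z$ and (iii) every finite group belong to $NDL$ by Freedman--Quinn \cite{FQ}; both of the latter rely on constructing contractible capped gropes inside the relevant covers and then extracting embedded disks.

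Next I would handle the virtually polycyclic case purely by bookkeeping. A polycyclic group admits by definition a subnormal series with cyclic (finite or infinite) quotients, so iterating closure under extensions starting from the trivial base places every polycyclic group in $NDL$. If $G$ is merely virtually polycyclic, choose a polycyclic subgroup $H$ of finite index and let $N$ be the intersection of its (finitely many) conjugates; then $N$ is a polycyclic normal subgroup of finite index, and one further application of closure under extensions to the sequence $1 \to N \to G \to G/N \to 1$ puts $G$ in $NDL$.

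Finally I would treat groups of subexponential growth. For finitely generated $G$ I would follow the original argument of Freedman--Teichner \cite{FreedmanTeichner}, as refined by Krushkal--Quinn \cite{KQ}: subexponential growth of balls in the Cayley graph of $G$ combines with a pigeonhole-type estimate on the heights of iteratively refined Whitney towers and capped gropes, producing in the limit the required embedded null disk. An arbitrary group of subexponential growth is the filtered colimit of its finitely generated subgroups, each of which inherits the growth condition, so closure under filtered colimits finishes the proof. The main obstacle is precisely this last case: it does not decompose into polycyclic-type building blocks and genuinely requires the refined geometric disk-embedding machinery just cited, whereas the virtually polycyclic case is essentially structural once the three base cases $\{1\}$, $\Z$, and finite groups are in hand.
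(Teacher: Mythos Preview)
The paper does not supply its own proof of this theorem: it is stated with attribution to Freedman--Quinn, Freedman--Teichner, and Krushkal--Quinn and left unproven, serving only as background for the class $NDL$. There is therefore nothing in the paper to compare your argument against.

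That said, your sketch is a faithful outline of how the cited references actually proceed. The virtually polycyclic case is exactly the structural bookkeeping you describe, resting on the closure of $NDL$ under extensions together with the base cases (trivial group, $\Z$, finite groups) established in \cite{FQ}. The subexponential-growth case genuinely requires the geometric input of \cite{FreedmanTeichner} (grope height raising controlled by the growth function) as sharpened in \cite{KQ}, followed by the filtered-colimit reduction you indicate. Your identification of where the hard work lies---in the subexponential case rather than the polycyclic one---is accurate.
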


\begin{exm}
Here are some exotic examples in $NDL$. The semidirect product $\Z^2 \rtimes_\alpha \Z$ with $\alpha=\left(\begin{smallmatrix}2 & 1 \\ 1 & 1\end{smallmatrix}\right)$ is polycyclic but has exponential growth. For all integers $n \neq -1, 0, 1$, the Baumslag--Solitar group $BS(1,n) = \Z[1/n] \rtimes_n \Z$ is finitely presented and solvable but not polycyclic. Grigorchuk's infinite 2-group $G$ is finitely generated but not finitely presented and has intermediate growth.
\end{exm}

Recall that, unless specified in the notation, the structure sets $\cS_\TOP^h$ and normal invariants $\cN_\TOP$ are homeomorphisms on the boundary (that is, rel $\bdry$) \cite[\S6.2]{HillmanBook}.

\begin{defn}
Let $Z$ be a non-empty compact connected topological 4-manifold. Denote the fundamental group $\pi := \pi_1(Z)$ and orientation character $\omega := w_1(\tau_Z)$. We declare that \textbf{$Z$ has class $SES^h$} if there exists an exact sequence of based sets:
\[
\cN_\TOP(Z \times I) \xrightarrow{~\sigma_5^h~} L_5^h(\pi,\omega) \xrightarrow{~\bdry~} \cS_\TOP^h(Z) \xrightarrow{~\eta~} \cN_\TOP(Z) \xrightarrow{~\sigma_4^h~} L_4^h(\pi,\omega).
\]
\end{defn}

The subclass $SES^h_+$ includes actions of groups in $K$- and $L$-theory (Defn.~\ref{defn:SESplus}).

This exact sequence has been proven for the above groups \cite[Thm.~11.3A]{FQ}.

\begin{thm}[Freedman--Quinn]\label{thm:FTKQ}
Let $X$ be a compact connected topological manifold of dimension 4. If $\pi_1(X)$ has class $NDL$, then $X$ has class $SES^h_+$ and satisfies the $s$-cobordism conjecture (i.e., all $s$-cobordisms on $X$ are homeomorphic to $X \times I$).
\end{thm}

Here is the Main Theorem of the paper. The existence and uniqueness question posed in the Title and Abstract, up to $h$-cobordism, is quantified in $\#$ of Part (2).

\begin{thm}\label{thm:main}
Let $X$ be a compact connected topological manifold of dimension 4.
\begin{enumerate}
\item
Suppose the fundamental group $\pi_1(X)$ is a free product of groups of class $NDL$. If $X$ is non-orientable, assume $\pi_1(X)$ is 2-torsionfree. Then there exists $r \geq 0$ such that the $r$-th stabilization $X \# r(S^2 \times S^2)$ has class $SES^h_+$.

\item
Suppose $X$ has the homotopy type of a connected sum $X_1 \# \cdots \# X_n$ such that each $X_i$ has class $SES^h_+$.  If $X$ is non-orientable, assume that $\pi_1(X)$ is 2-torsionfree. Then the homotopy connected sum $X$ has class $SES^h_+$. Moreover, the following induced function is a bijection:
\[
\#: \prod_{i=1}^n \cS_\TOP^h(X_i) \longra \cS_\TOP^h(X).
\]
\end{enumerate}
\end{thm}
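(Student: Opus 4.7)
The plan is to compare a putative surgery sequence for $X$ with the direct sum of the given surgery sequences for the summands $X_i$, and then deduce exactness via algebraic decompositions of the normal invariants and of the relevant $L$-groups. The main geometric input will be a Cappell-style splitting theorem for homotopy equivalences along the $n-1$ connecting 3-spheres.

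I would start with part (2). Puncturing each $X_i$ and collapsing the separating 3-spheres of the decomposition $X \simeq X_1 \# \cdots \# X_n$ yields a cofibration $\bigvee^{n-1} S^3 \hookrightarrow X \to \bigvee_i X_i$. Since $\cN_\TOP$ is represented by based maps into $G/\TOP$ and $\pi_3(G/\TOP)=0$, this gives a natural bijection $\cN_\TOP(X) \cong \prod_i \cN_\TOP(X_i)$, and analogously rel $\bdry$ for $X \times I$. On the $L$-theoretic side, Van Kampen furnishes $\pi_1(X) = \pi_1(X_1) * \cdots * \pi_1(X_n)$, and Cappell's decomposition reads
\[
L_*^h(\pi_1 X, \omega) \cong \bigoplus_{i=1}^n L_*^h(\pi_1 X_i, \omega_i) \oplus \UNil_*,
\]
where the $\UNil$-summand vanishes under the 2-torsion-free hypothesis by Banagl--Ranicki and Connolly--Davis (and, in the orientable case, by Cappell's original computations for $NDL$-type groups). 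The surgery obstruction $\sigma_*$ and boundary $\bdry$ are compatible with these splittings by naturality.

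Next I would define $\# : \prod_i \cS_\TOP^h(X_i) \to \cS_\TOP^h(X)$ by simultaneous connected sum of representative homotopy equivalences, extended by the identity on the gluing spheres. The bijectivity of $\#$, together with exactness of the proposed surgery sequence for $X$, would follow from a five-lemma diagram chase in
\[
\begin{CD}
\prod \cN_\TOP(X_i \times I) @>>> \bigoplus L_5^h @>>> \prod \cS_\TOP^h(X_i) @>>> \prod \cN_\TOP(X_i) @>>> \bigoplus L_4^h \\
@VVV @VVV @VV{\#}V @VVV @VVV \\
\cN_\TOP(X \times I) @>>> L_5^h(\pi_1 X) @>>> \cS_\TOP^h(X) @>>> \cN_\TOP(X) @>>> L_4^h(\pi_1 X)
\end{CD}
\]
whose top row is exact by the $SES^h_+$ hypothesis on each $X_i$ and whose outer vertical arrows are the bijections established above. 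The decisive step is surjectivity of $\#$: given a homotopy equivalence $f : M \to X$, one must split it up to normal cobordism along the connecting 3-spheres into pieces $M_i \to X_i$. The obstruction to such a splitting is controlled by $\UNil$ (Cappell) and so vanishes under our hypotheses; the $SES^h_+$ data on each $X_i$ then supply the $L_5^h(\pi_1 X_i)$-action on $\cS_\TOP^h(X_i)$ needed to realize and absorb the splitting obstruction summand by summand. Carrying out this $\pi_1$-transverse splitting in the 4-dimensional topological category, rather than the usual $\dim \geq 5$ setting, is the main obstacle of the proof.

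For part (1), the idea is first to realize $X$, after enough stabilization by $S^2 \times S^2$, as a genuine connected sum $X_1 \# \cdots \# X_n$ of closed 4-manifolds with $\pi_1(X_i) = \Gamma_i$. This would proceed by $\pi_1$-surgery on embedded 2-spheres representing the Kurosh free factors of the free product, which can be made disjointly embedded after sufficient stabilization even though $\pi_1(X)$ itself need not be $NDL$; individual $NDL$-ness of each $\Gamma_i$ controls each local move via Theorem~\ref{thm:FTKQ}. Once such a decomposition is in hand, Theorem~\ref{thm:FTKQ} yields $SES^h_+$ on every $X_i$, and part (2) then applies to $X \# r(S^2 \times S^2)$ to finish.
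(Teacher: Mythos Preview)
Your overall architecture matches the paper's, but the step you yourself flag as ``the main obstacle'' is precisely where all the content lies, and your proposal does not resolve it. In dimension~4, Cappell's splitting theorem does not apply: even with $\UNil=0$, you cannot conclude that a homotopy equivalence $f:M\to X$ splits along the connecting $S^3$ up to $s$-cobordism. The paper's resolution is a two-step process. First, Theorem~\ref{thm:generalized_weinberger}(1) (a sharpened Weinberger-type theorem) shows that $f$ is topologically $s$-bordant to a \emph{$\Z$-homology} split map---meaning the preimage of $S^3$ is only a homology 3-sphere $\Sigma$. That argument crosses with the closed simply-connected 8-manifold $Q=\CP^4\#2(S^3\times S^5)$ (which has both Euler characteristic and signature equal to~$1$) to import high-dimensional periodicity, and then uses the $SES^h_+$ property of the punctured pieces $(pX_i,S^3)$ (established separately in Lemma~\ref{lem:puncture}) to realize the needed $L_5$- and $\Wh_1$-corrections. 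Second, a \emph{neck exchange}: by Freedman's theorem \cite[Cor.~9.3C]{FQ}, $\Sigma$ is $\Z$-homology $h$-cobordant to $S^3$ through a simply-connected 4-manifold, so one can trade $\Sigma$ for a genuine $S^3$. Without these two ingredients the surjectivity of $\#$ is simply unsupported.

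Two smaller issues. Your five-lemma framing is not sound as stated: the structure sets are only based sets (not groups), and more seriously the bottom row of your diagram is exactly what you are trying to prove exact, so you cannot invoke a five-lemma to obtain both bijectivity of $\#$ and exactness simultaneously. The paper instead proves $\#$ surjective and injective directly (injectivity via the relative 5-dimensional form of Cappell's nilpotent normal cobordism construction applied to an $h$-bordism), and then \emph{transports} exactness and the $L_5^h$- and $\Wh_1$-actions across $\#$. Also, your claimed bijection $\cN_\TOP(X)\cong\prod_i\cN_\TOP(X_i)$ is not quite right: the Puppe sequence contributes a $\pi_4(G/\TOP)^{\,n-1}\cong\Z^{n-1}$ term, which is why the paper passes to the reduced $\widetilde{\cN}_\TOP=\cN_\TOP/(E_8\to S^4)$ and $\widetilde{L}_4^h$.

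For Part~(1), rather than performing ad hoc $\pi_1$-surgery on 2-spheres, the paper invokes Hillman's stable decomposition theorem \cite{HillmanFreeProduct} (cf.\ \cite{KLT1,KurMat}) to obtain $X\#r(S^2\times S^2)\cong X_1\#\cdots\#X_n$ with $\pi_1(X_i)\cong\Gamma_i$ directly, and then feeds this into Part~(2).
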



The proof of our theorem consists of two steps: first homology split along each essential 3-sphere \cite{Weinberger_fibering}, and then perform a neck exchange trick \cite{FQ} to replace homology 3-spheres with genuine ones (cf.~\cite{KLT2, JK_RP4}). The first step is possible because the high-dimensional splitting obstruction group \cite{Cappell_free} has recently been shown to vanish \cite{CD2}. No direct surgeries are performed---only cobordisms are attached. Our techniques do not show triviality of $s$-cobordisms.

Indeed, it turns out that a limited form of surgery does work for free groups.

\begin{exm}
Suppose $X$ is a closed connected topological 4-manifold with free fundamental group: $\pi_1(X)=F_n$. Then a fixed stabilization $X \# r(S^2 \times S^2)$ has a  topological $s$-cobordism surgery sequence, for some $r \geq 0$ depending on $X$.
\end{exm}

Here are other caveats, which place our main theorem into historical context.

\begin{rem}
A homotopy decomposition into a connected sum need not exist. A counterexample to the homotopy Kneser conjecture with $\pi_1(X) = G_3 * G_5$ where $G_p := C_p \times C_p$ was constructed by M.~Kreck, W.~L\"uck, and P.~Teichner \cite{KLT2}.
\end{rem}

\begin{rem}
Given a homotopy decomposition into a connected sum, a homeomorphism decomposition need not exist. There exist infinitely many examples of non-orientable closed topological 4-manifolds homotopy equivalent to a connected sum ($X = \RP^4 \# \RP^4$) that are not homeomorphic a non-trivial connected sum \cite{JK_RP4, BDK}. Hence $\#$ is not always a bijection in the case $\pi_1(X) = D_\infty \in NDL$.
\end{rem}

\begin{rem}
For certain groups $\pi_1(X)$ unknown in $NDL$, such as poly-surface groups, results on exactness at $\cN_\TOP(X)$ are found in \cite{HillmanBook, Khan_smoothable, HegRep, CavSpa}.
\end{rem}

\begin{rem}
The modular group $PSL(2,\Z) \cong C_2 * C_3$ is an example of a free product of $NDL$ groups. It has a discrete cofinite-area action on $\bbH^2$. However our theorem in the non-orientable case excludes it and $SL(2,\Z) \cong C_4 *_{C_2} C_6$. The group $PSL(2,\Z)$ plays a key role in the orientable case of free products \cite{CD2}.
\end{rem}

Let us conclude this subsection with an application to fibering of 5-manifolds. Partial results were obtained in \cite{Weinberger_fibering, Khan_fibering}. The proof is located in Section~\ref{sec:main_proofs}.

\begin{thm}\label{thm:fibering}
Let $M$ be a closed topological 5-manifold. Let $X$ be a closed topological 4-manifold of class $SES^h_+$. Suppose $f: M \to S^1$ is a continuous map such that the induced infinite cyclic cover $\overline{M} = \hofiber(f)$ is homotopy equivalent to $X$. If the Farrell--Siebenmann fibering obstruction $\tau(f) \in \Wh_1(\pi_1 M)$ vanishes, then $f$ is homotopic to a topological $s$-block bundle projection with pseudofiber $X$.
\end{thm}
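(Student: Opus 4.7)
The plan is to execute the Farrell--Siebenmann fibering procedure in dimension five, with the hypothesis that $X$ has class $SES^h_+$ supplying the four-dimensional surgery input. First, after homotoping $f$ if necessary, I would apply transversality to a regular value $p \in S^1$ so that $N := f^{-1}(p) \subset M$ is a closed connected 4-manifold dual to $f^*[S^1]$. Cutting $M$ open along $N$ produces a compact five-dimensional cobordism $(W; N_-, N_+)$, and $M$ is recovered by gluing $N_+$ to $N_-$ via the identity homeomorphism. Correspondingly, the infinite cyclic cover is the concatenation $\overline{M} = \cdots \cup W_{-1} \cup W_0 \cup W_1 \cup \cdots$, and composing $N_0 \hookrightarrow \overline{M}$ with the given homotopy equivalence $\overline{M} \simeq X$ yields a reference map $N \to X$ that need not yet be a homotopy equivalence.

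Next, I would modify $N$ inside $M$ so that $N \simeq X$. Viewing the reference map above as a degree-one normal map (the normal data pulled back from $\overline{M}$), the hypothesis $\tau(f) = 0$ ensures via the Wang sequence of the infinite cyclic cover that the surgery obstruction lies in a controlled subgroup of $L_4^h(\pi_1 X, \omega)$, and exactness at $\cN_\TOP(X)$ of the surgery sequence for $X$ lets me realize a normal cobordism inside $M$ that kills this obstruction. After this step both boundary inclusions $N_\pm \hookrightarrow W$ are homotopy equivalences, so $W$ is a topological $h$-cobordism between two 4-manifolds in the simple homotopy type of $X$.

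The main obstacle will be to upgrade this $h$-cobordism to a simple one. Here the residual Whitehead torsion of $N_+ \hookrightarrow W$ inside $\Wh_1(\pi_1 X)$ coincides, up to a contribution controlled by the $L_5$-action on $\cS^s_\TOP(X)$ furnished by the $SES^h_+$ enrichment, with the restriction of the Farrell--Siebenmann obstruction $\tau(f) \in \Wh_1(\pi_1 M)$. Since $\tau(f) = 0$ by hypothesis, a further normal bordism rel $\bdry$ turns $W$ into a topological $s$-cobordism from $X$ to $X$ realizing the monodromy of $f$. This $s$-cobordism structure on $W$ is precisely the datum of a topological $s$-block bundle structure on $f$ over $S^1$ with pseudofiber $X$; one cannot upgrade to a genuine topological fiber bundle because the five-dimensional $s$-cobordism theorem is not known for $\pi_1 X$ in this generality.
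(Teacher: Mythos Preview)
Your outline is the Farrell--Siebenmann fibering procedure, which is also what the paper uses; the paper, however, packages the hard steps by passing to a mapping-torus model $g: M \to X \rtimes_h S^1$ (built from the homotopy equivalence $\overline{M}\simeq X$), observing $\tau(g)=\tau(f)=0$, and then invoking the codimension-one splitting theorem from \cite[Thm.~5.4, Thm.~5.6]{Khan_fibering} to split $g$ along the fibre $X$. The hypothesis $SES^h_+$ is exactly what makes that four-dimensional splitting go through. Your sketch tries to unpack that splitting argument by hand, and the unpacking contains a genuine confusion.

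The problematic sentence is: ``the hypothesis $\tau(f)=0$ ensures via the Wang sequence of the infinite cyclic cover that the surgery obstruction lies in a controlled subgroup of $L_4^h(\pi_1 X,\omega)$.'' The element $\tau(f)\in\Wh_1(\pi_1 M)$ is a $K$-theoretic torsion; no Wang sequence connects it to the $L_4$-surgery obstruction $\sigma_4(N\to X)$, and $\tau(f)=0$ plays no role whatsoever in arranging $N\simeq X$. What one actually uses at this stage is that $g$ is a homotopy equivalence: the Farrell--Hsiang splitting mechanism (stacking fundamental domains and invoking the $L_5$-action on $\cS_\TOP^h(X)$ supplied by $SES^h_+$) then lets one replace $N$ by an ambient normally cobordant $N'$ with $N'\to X$ a homotopy equivalence. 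Note also that exactness at $\cN_\TOP(X)$ yields only an \emph{abstract} normal bordism; producing one inside $M$ is a separate matter that again requires the $L_5$-action and the cobordism $W$, not merely exactness. The hypothesis $\tau(f)=0$ enters only at your final stage, to identify the torsion of the resulting $h$-cobordism and upgrade it (via the $\Wh_1$-action in $SES^h_+$) to an $s$-cobordism; you invoke it there correctly, but you have already spent it once where it does no work.
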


Note we obtain a fiber bundle projection if $X$ satisfies the $s$-cobordism conjecture.

\subsection{Topological $s$-rigidity for 4-dimensional manifolds}

The purpose of this final subsection is an elementary observation (Thm.~\ref{thm:rigidity}) from which we conclude the Borel Conjecture is true in dimension 4 up to topological $s$-cobordism, given that the fundamental group satisfies the Farrell--Jones Conjecture (see Cor.~\ref{cor:algebraic_rigidity}).

\begin{defn}
A compact topological manifold $Z$ is \textbf{topologically rigid} if, for all compact topological manifolds $M$, any homotopy equivalence $h: M \to Z$, with restriction $\bdry h: \bdry M \to \bdry Z$ a homeomorphism, is homotopic to a homeomorphism.
\end{defn}

Recall the Borel conjecture is proven for certain good groups \cite[Thm.~11.5]{FQ}.

\begin{thm}[Freedman--Quinn]
Suppose $Z$ is an aspherical compact topological 4-manifold such that $\pi_1(Z)$ is virtually polycyclic.  Then $Z$ is topologically rigid.
\end{thm}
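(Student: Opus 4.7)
The plan is to combine the Freedman--Quinn 4-dimensional surgery machinery of Theorem~\ref{thm:FTKQ} with the Farrell--Jones isomorphism conjecture in $L$-theory, classical for virtually polycyclic groups, in order to force the vanishing of the topological structure set.

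First, since $\pi := \pi_1(Z)$ is virtually polycyclic, it belongs to the class $NDL$, so Theorem~\ref{thm:FTKQ} endows $Z$ with class $SES^h_+$, yielding the exact sequence of based sets
\[
\cN_\TOP(Z \times I) \xra{\sigma_5^h} L_5^h(\pi, \omega) \xra{\bdry} \cS_\TOP^h(Z) \xra{\eta} \cN_\TOP(Z) \xra{\sigma_4^h} L_4^h(\pi, \omega).
\]
It will suffice to show $\cS_\TOP^h(Z) = \{*\}$: indeed, any homotopy equivalence $h \colon M \to Z$ restricting to a homeomorphism on the boundary would then represent the trivial class, and since $\pi \in NDL$ the resulting topological $s$-cobordism between $M$ and $Z$ is a product, so $h$ is homotopic to a homeomorphism.

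Next I would exploit the asphericity of $Z$. Since $Z$ models $B\pi$, Ranicki's identification realizes $\cN_\TOP(Z)$ as the $1$-connective $L$-theoretic generalized homology group $H_4(Z; \bL_\bullet\langle 1\rangle)$, and the surgery obstruction maps $\sigma_n^h$ factor through the Ranicki assembly maps
\[
A_n \colon H_n(B\pi; \bL_\bullet) \longra L_n^h(\pi, \omega).
\]
For any virtually polycyclic group, the Farrell--Jones isomorphism conjecture in $L$-theory is known by the classical work of Farrell and Hsiang, with later refinements by Bartels--L\"uck--Reich and Roushon--Wegner, so every $A_n$ is a bijection. In particular, the Whitehead group $\Wh(\pi)$ vanishes, so the distinction between $\cS^h_\TOP$ and $\cS^s_\TOP$ disappears, and every $h$-cobordism over $Z$ is an $s$-cobordism.

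Combining exactness of the surgery sequence with this assembly isomorphism then forces $\cS_\TOP^h(Z) = \{*\}$ via a standard diagram chase: bijectivity of $A_4$ yields that $\eta$ has trivial image, while bijectivity of $A_5$ yields that $\bdry$ is trivial. The main obstacle I anticipate is in reconciling the connective cover $\bL_\bullet\langle 1\rangle$ appearing in $\cN_\TOP(Z)$ with the periodic spectrum $\bL_\bullet$ appearing in the assembly target; the degree-zero discrepancy $L_0(\Z) \cong \Z$ accounts for signatures, but on the closed aspherical 4-manifold $Z$ this contribution is controlled by the fundamental class pairing $H_4(Z;\Z) \hookrightarrow L_4^h(\pi,\omega)$, so triviality of $\cS^h_\TOP(Z)$ is preserved. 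Topological rigidity of $Z$ then follows from the preceding paragraph.
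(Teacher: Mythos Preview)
The paper does not supply its own proof of this statement; it is quoted as a known result of Freedman--Quinn \cite[Thm.~11.5]{FQ}. That said, your outline is essentially correct and in fact anticipates the paper's own proofs of the closely related Theorem~\ref{thm:rigidity} and Corollary~\ref{cor:algebraic_rigidity}: injectivity of $\sigma_4$ and surjectivity of $\sigma_5$ force $\cS_\TOP^s(Z)=\{*\}$ via exactly the normal-bordism cancellation argument given there, and the passage from $s$-rigidity to genuine rigidity uses the $NDL$ $s$-cobordism theorem as you indicate. Your worry about the connective--periodic discrepancy is resolved in the paper's proof of Corollary~\ref{cor:algebraic_rigidity} by observing that $\cdim(\Gamma)\leq 4$ makes the connective assembly map $A_4^\Gamma$ a monomorphism and $A_5^\Gamma$ an isomorphism, so no extra signature correction is needed. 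One small historical correction: the $L$-theory isomorphism for virtually polycyclic groups is due to Farrell--Hsiang and Farrell--Jones (and is covered by the Bartels--Farrell--L\"uck reference the paper cites), not ``Roushon--Wegner.''
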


The following crystallographic examples include the 4-torus $T^4$. It turns out that there are only finitely many examples in any dimension (e.g., see~\cite[Thm.~21]{Farkas}).

\begin{exm}
Suppose $\Gamma$ is a Bieberbach group of rank 4, that is, a torsionfree lattice in the Lie group $\Isom(\E^4)$. Then $Z = \R^4/\Gamma$ is topologically rigid (cf.~\cite{FH}).
\end{exm}

Let us now turn our attention to a weaker form of rigidity for general groups.

\begin{defn}
A compact topological manifold $Z$ is \textbf{topologically $s$-rigid} if, for all compact topological manifolds $M$, any homotopy equivalence $h: M \to Z$, with restriction $\bdry h: \bdry M \to \bdry Z$ a homeomorphism, is itself topologically $s$-bordant rel $\bdry M$ to a homeomorphism. It suffices that the Whitehead group $\Wh_1(\pi_1 Z)$ vanishes and the topological $s$-cobordism structure set $\cS_\TOP^s(Z)$ is a singleton.
\end{defn}

The following important basic observation does not seem to have appeared in the literature.
In particular, we do not assume that the fundamental group is $NDL$.

\begin{thm}\label{thm:rigidity}
Let $Z$ be a compact topological 4-manifold with fundamental group $\pi$ and orientation character $\omega: \pi \to \{\pm 1\}$. Suppose the surgery obstruction map $\sigma_4^s: \cN_\TOP(Z) \to L_4^s(\pi,\omega)$ is injective, and suppose the surgery obstruction map $\sigma_5^s: \cN_\TOP(Z \times I) \to L_5^s(\pi,\omega)$ is surjective. If $\Wh_1(\pi)=0$ then $Z$ is topologically $s$-rigid. Also $Z$ has class $SES^s_+$.
\end{thm}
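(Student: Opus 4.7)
The plan is to deduce both $s$-rigidity and class $SES^s_+$ for $Z$ from the two hypotheses on $\sigma_4^s$ and $\sigma_5^s$ by combining them with the portion of the surgery exact sequence that already holds in dimension four without any $NDL$ assumption. The key observation is that Wall realization and the lifting of equal normal invariants to a normal bordism involve surgery problems whose ambient dimension is five, so those pieces of exactness are available for free.

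First, I would record the formal part. The composite $\sigma_4^s \circ \eta$ vanishes, since the normal invariant of a homotopy equivalence has trivial surgery obstruction, and $\bdry \circ \sigma_5^s$ vanishes by the definition of Wall realization. More substantively, the sequence is exact at $\cS_\TOP^s(Z)$: a class $[h : M \to Z]$ with trivial normal invariant admits a normal bordism to $\id_Z$, and doing surgery on this bordism over $Z \times I$ (a $5$-dimensional problem, so no $NDL$ needed) produces an element $x \in L_5^s(\pi,\omega)$ with $\bdry(x)=[h]$. Exactness at $L_5^s(\pi,\omega)$ is likewise a formal consequence of the definitions of $\sigma_5^s$ and $\bdry$.

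The main computation is then a short diagram chase. Given $[h] \in \cS_\TOP^s(Z)$, injectivity of $\sigma_4^s$ combined with $\sigma_4^s(\eta([h]))=0$ forces $\eta([h])$ to be the base point; exactness at $\cS_\TOP^s(Z)$ then yields $[h]=\bdry(x)$ for some $x \in L_5^s(\pi,\omega)$; surjectivity of $\sigma_5^s$ produces $y \in \cN_\TOP(Z\times I)$ with $\sigma_5^s(y)=x$; hence $[h] = \bdry \sigma_5^s(y) = \ast$. So $\cS_\TOP^s(Z)$ is a singleton. The hypothesis $\Wh_1(\pi)=0$ ensures that every homotopy equivalence $h$ with $\bdry h$ a homeomorphism is automatically simple and represents a class in $\cS_\TOP^s(Z)$, and that every $h$-cobordism over $Z$ is an $s$-cobordism; combining these, triviality of $\cS_\TOP^s(Z)$ translates directly into topological $s$-rigidity of $Z$.

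Finally, once $\cS_\TOP^s(Z)=\{\ast\}$ is known, exactness at $\cN_\TOP(Z)$ is immediate from injectivity of $\sigma_4^s$, and exactness at $L_5^s(\pi,\omega)$ is forced by surjectivity of $\sigma_5^s$; together with the preceding paragraph this yields class $SES^s$. The ``$+$'' enhancement of Definition~\ref{defn:SESplus} follows because the structure set is a singleton while the other four terms carry their standard $K$- and $L$-theoretic actions. I expect no genuine obstacle: the entire content is packaging a formal diagram chase so that it can be invoked whenever the two surgery obstruction maps are known to be controlled, regardless of whether $\pi$ is $NDL$.
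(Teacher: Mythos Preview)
Your proposal is correct and follows essentially the same approach as the paper. The paper carries out the identical construction more concretely---produce a normal bordism $F$ from $\id_Z$ to $h$ using injectivity of $\sigma_4^s$, use surjectivity of $\sigma_5^s$ to find $F'$ with opposite obstruction, glue, and do $5$-dimensional surgery rel $\bdry$ to obtain an $s$-bordism---while you package these same moves as a diagram chase through the portion of the surgery sequence (exactness at $\cS_\TOP^s$ and vanishing of $\bdry\circ\sigma_5^s$) that is available because the ambient surgery problems are $5$-dimensional.
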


We sharpen an observation of J.~Hillman \cite[Lem.~6.10]{HillmanBook} to include map data.

\begin{cor}\label{cor:unstable_rigidity}
Let $Z$ be a compact topological 4-manifold.  Suppose the product $Z \times S^1$ is topologically rigid. If $\Wh_1(\pi_1 Z)=0$ then $Z$ is topologically $s$-rigid.
\end{cor}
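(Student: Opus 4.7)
The plan is to reduce to Theorem~\ref{thm:rigidity} by verifying its two surgery-obstruction hypotheses using the topological rigidity of $Z \times S^1$. First I would observe that $\cS_\TOP^s(Z \times S^1) = \{*\}$: by hypothesis any homotopy equivalence into $Z \times S^1$ is homotopic rel $\bdry$ to a homeomorphism, and the track of such a homotopy is a product $h$-cobordism with trivial Whitehead torsion (homeomorphisms being simple), hence an $s$-cobordism to a homeomorphism. Since $\dim(Z \times S^1) = 5$, the topological $s$-cobordism surgery sequence is exact, so triviality of the structure set forces $\sigma_5^s : \cN_\TOP(Z \times S^1) \to L_5^s(\pi \times \Z, \omega')$ to be injective and $\sigma_6^s : \cN_\TOP((Z \times S^1) \times I) \to L_6^s(\pi \times \Z, \omega')$ to be surjective, where $\omega'$ extends $\omega$ trivially over $\Z$.

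The next step is to invoke the Shaneson--Ranicki splitting. Because $\Z$ acts trivially on $\pi$, the relevant $\UNil$ groups vanish and
\[
L_n^s(\pi \times \Z, \omega') \cong L_n^s(\pi, \omega) \oplus L_{n-1}^h(\pi, \omega);
\]
by $\Wh_1(\pi) = 0$ and the Rothenberg sequence, the lower summand simplifies further to $L_{n-1}^s(\pi, \omega)$. On the normal-invariant side, the projection $Z \times S^1 \to Z$ and the collapse to $\Sigma Z$ induce a compatible splitting
\[
\cN_\TOP(Z \times S^1) \cong \cN_\TOP(Z) \oplus \cN_\TOP(Z \times I),
\]
and similarly one dimension up for $(Z \times S^1) \times I$. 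Under these identifications the surgery-obstruction map is block diagonal, with the $\cN_\TOP(Z)$-block computing $\sigma_4^s$ for $Z$ (via the formula $\sigma_5^s(f \times \id_{S^1}) = (0, \sigma_4^s(f))$) and the $\cN_\TOP(Z \times I)$-block computing $\sigma_5^s$ for $Z \times I$.

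Putting the pieces together: injectivity of $\sigma_5^s$ for $Z \times S^1$ yields injectivity of the $\cN_\TOP(Z)$-summand, hence of $\sigma_4^s : \cN_\TOP(Z) \to L_4^s(\pi, \omega)$; surjectivity of $\sigma_6^s$ for $(Z \times S^1) \times I$ yields surjectivity of the $\cN_\TOP(Z \times I)$-summand, hence of $\sigma_5^s : \cN_\TOP(Z \times I) \to L_5^s(\pi, \omega)$. Theorem~\ref{thm:rigidity} then delivers topological $s$-rigidity of $Z$. The main obstacle will be verifying compatibility of the Shaneson--Ranicki $L$-group splitting with the normal-invariant splitting at the level of surgery obstructions (the block-diagonal property); the decoration shift from $s$ to $h$ in Shaneson's formula is harmless here precisely because $\Wh_1(\pi) = 0$.
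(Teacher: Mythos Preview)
Your proposal is correct and follows essentially the same route as the paper: deduce injectivity/surjectivity of $\sigma_5^s$ and $\sigma_6^s$ for $Z\times S^1$ from its rigidity, use the Shaneson splitting together with its compatibility with surgery obstructions to pull these down to injectivity of $\sigma_4^s$ and surjectivity of $\sigma_5^s$ for $Z$, and then invoke Theorem~\ref{thm:rigidity}. The paper phrases the compatibility step via Ranicki's identification of $\sigma_*$ with the connective assembly map $A_*: H_*(Z;\bL_0)\to L_*(\pi)$ and a commutative diagram with split-exact rows, noting that the right-splitting (multiplication by the symmetric complex of $S^1$) commutes with assembly---exactly the ``block-diagonal'' property you flag as the main point to verify.
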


This allows us to generalize a theorem of J.~Hillman for surface bundles over surfaces \cite[Thm.~6.15]{HillmanBook}. His conclusion was that the source and target are abstractly $s$-cobordant. Our new feature is $s$-rigidity of the homotopy equivalence.

\begin{exm}
Suppose $Z$ is a compact topological 4-manifold that is the total space of a topological fiber bundle of aspherical surfaces over an aspherical surface. Then $Z$ is topologically $s$-rigid, as follows. By \cite[Theorem~6.2]{HillmanBook}, the group $\Wh_1(\pi_1 Z)$ vanishes. By the proof of \cite[Theorem~6.15]{HillmanBook}, the set $\cS_\TOP^s(Z \times S^1)$ is a singleton. Now apply Corollary~\ref{cor:unstable_rigidity}. Alternatively, we can use Corollary~\ref{cor:algebraic_rigidity} and the recently established validity of $FJ_L$ for polysurface groups \cite{BFL_lattice}.
\end{exm}

In the topology of high-dimensional manifolds, the following class of fundamental groups has been of intense interdisciplinary interest for at least the past two decades.

\begin{defn}
Denote $FJ_L$ as the class of groups $\Gamma$ that are $K$-flat and satisfy the Farrell--Jones Conjecture in $L$-theory \cite{FJ_iso}. That is, the elements $\Gamma$ of $FJ_L$ satisfy $\Wh_1(\Gamma \times \Z^n) = 0$ and $H_n^\Gamma(E_{\matheurm{all}}\Gamma, E_{\matheurm{vc}}\Gamma; \underline{\bL}_\Z^\infdec) = 0$ for all $n \geq 0$ (see~\cite{DL1}).
\end{defn}

We shall focus on the torsionfree case. This has nice subclasses \cite{FJ_GLmR, BL_CAT0, BFL_lattice}.

\begin{thm}[Farrell--Jones, Bartels--L\"uck, Bartels--Farrell--L\"uck]
Let $\Gamma$ be a discrete torsionfree group. Then $\Gamma$ has class $FJ_L$ if:
\begin{itemize}
\item $\Gamma$ is the fundamental group of a complete $A$-regular Riemannian manifold with all sectional curvatures non-positive, or

\item $\Gamma$ is hyperbolic with respect to the word metric, or

\item $\Gamma$ admits a cocompact proper action by isometries on a complete finite-dimensional $\mathrm{CAT}(0)$ metric space, or

\item $\Gamma$ is a virtually polycyclic group (equivalently, a virtually poly-$\Z$ group), or

\item $\Gamma$ is a cocompact lattice in a virtually connected Lie group.
\end{itemize}
\end{thm}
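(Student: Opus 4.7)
The plan is to verify membership in $FJ_L$ for each of the five listed torsionfree groups $\Gamma$ by checking both halves of the definition separately: $K$-flatness $\Wh_1(\Gamma \times \Z^n) = 0$, and the $L$-theoretic relative assembly vanishing $H_n^\Gamma(E_{\matheurm{all}}\Gamma, E_{\matheurm{vc}}\Gamma; \underline{\bL}_\Z^\infdec) = 0$, both for every $n \geq 0$. The approach is identical in each case: each bullet is a hypothesis under which the full Farrell--Jones Conjecture---in both $K$- and $L$-theory, with coefficients---has been proven in the cited literature, so both vanishing statements fall out as formal consequences of the corresponding assembly isomorphisms.

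For the $L$-theoretic side, bullet (1) invokes the original Farrell--Jones theorem for fundamental groups of complete $A$-regular nonpositively curved Riemannian manifolds; bullets (2) and (3) invoke the theorems of Bartels--L\"uck establishing Farrell--Jones (with finite wreath products) for word-hyperbolic groups and for cocompact isometric actions on complete finite-dimensional CAT(0) metric spaces; bullet (4) was proven for virtually polycyclic groups by Farrell--Hsiang and is also recovered as a special case of bullet (5); and bullet (5) is the Bartels--Farrell--L\"uck theorem for cocompact lattices in virtually connected Lie groups.

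For the $K$-theoretic side, the same references also establish the $K$-theoretic Farrell--Jones Conjecture in each case. Since $\Gamma$ is torsionfree, the family of virtually cyclic subgroups reduces to the trivial and the infinite cyclic subgroups, and the resulting assembly calculation delivers $\Wh_1(\Gamma) = 0$. To upgrade this to $K$-flatness one repeats the same argument with $\Gamma \times \Z^n$ in place of $\Gamma$; the one substantive observation required is that each of the five hypotheses is stable under direct product with $\Z^n$---a nonpositively curved $A$-regular manifold crossed with a flat $n$-torus remains such, a CAT(0) group crossed with $\Z^n$ is CAT(0), a virtually polycyclic group stays virtually polycyclic, and a cocompact lattice in $G$ gives one in $G \times \R^n$. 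In bullet (2) the hyperbolicity of $\Gamma$ is destroyed under the product, so one reroutes through bullet (3) to handle $\Wh_1(\Gamma \times \Z^n)$.

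The principal---and essentially only---nontrivial point is this closure under $\Z^n$-product, together with the standard calculation of the source of the $K$-theoretic assembly map for torsionfree groups. Otherwise the theorem is a compilation of the cited references, and I anticipate no genuine obstacle to writing it up.
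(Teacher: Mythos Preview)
The paper does not prove this theorem; it is stated as a compilation of results from the cited references \cite{FJ_GLmR, BL_CAT0, BFL_lattice}, with no argument given. So your proposal is not being compared against a proof in the paper but against a bare citation, and in that sense your write-up is more detailed than what the paper offers.

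That said, there is a genuine gap in your sketch. For bullet~(2) you correctly observe that $\Gamma \times \Z^n$ is no longer word-hyperbolic, and you propose to ``reroute through bullet~(3).'' But it is an open problem whether every word-hyperbolic group admits a proper cocompact isometric action on a finite-dimensional $\mathrm{CAT}(0)$ space; the Rips complex is not $\mathrm{CAT}(0)$ in general, and there is no known construction producing such an action for arbitrary hyperbolic $\Gamma$. So the reroute does not go through.

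The correct repair is to use the inheritance properties of the Farrell--Jones Conjecture with coefficients (or, in the Bartels--L\"uck formulation, with finite wreath products): the class of groups satisfying it is closed under taking direct products, subgroups, and directed colimits. In particular, once FJC with coefficients is known for the hyperbolic group $\Gamma$ and for $\Z$, it follows for $\Gamma \times \Z^n$, and then the $K$-flatness $\Wh_1(\Gamma \times \Z^n)=0$ drops out from the torsionfree assembly calculation exactly as you describe. This inheritance argument also uniformly handles all five bullets without needing to check geometric stability under $\times\,\Z^n$ in each case separately.
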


We state our $s$-cobordism answer to the Borel conjecture for exponential growth.

\begin{cor}\label{cor:algebraic_rigidity}
Suppose $Z$ is an aspherical compact topological 4-manifold such that $\pi_1(Z)$ has class $FJ_L$. Then $Z$ is topologically $s$-rigid. Also $Z$ has class $SES^h_+$.
\end{cor}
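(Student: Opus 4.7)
The plan is to verify the three hypotheses of Theorem~\ref{thm:rigidity}---that $\Wh_1(\pi) = 0$, that $\sigma_4^s$ is injective, and that $\sigma_5^s$ is surjective (writing $\pi := \pi_1(Z)$ and $\omega$ for the orientation character)---and then invoke that theorem. Asphericity of $Z$ forces $\pi$ to be torsionfree, since any nontrivial finite subgroup would give a finite group a finite-dimensional $K(G,1)$, contrary to Milnor--Swan. The $FJ_L$ hypothesis immediately yields $\Wh_1(\pi) = 0$ (the $n=0$ case in the definition). Iterating the Bass--Heller--Swan decomposition against the full vanishing $\Wh_1(\pi \times \Z^n) = 0$ also gives $\widetilde{K}_0(\Z\pi) = 0$ and $K_{-i}(\Z\pi) = 0$ for $i \geq 1$, so the Rothenberg exact sequences collapse to give identifications $L_*^s(\Z\pi, \omega) \cong L_*^h(\Z\pi, \omega) \cong L_*^\infdec(\Z\pi, \omega)$.

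Since $Z$ is aspherical, $Z \simeq B\pi$; and torsionfreeness of $\pi$ yields $H_*^\pi(E_\matheurm{vc}\pi; \underline{\bL}_\Z^\infdec) \cong H_*(B\pi; \underline{\bL}_\Z^\infdec)$. Combined with the relative vanishing $H_*^\pi(E_\matheurm{all}\pi, E_\matheurm{vc}\pi; \underline{\bL}_\Z^\infdec) = 0$ of $FJ_L$, the classical $\Z\pi$-assembly map
\[
A : H_*(Z; \underline{\bL}_\Z^\infdec) \longra L_*^\infdec(\Z\pi, \omega) = L_*^s(\Z\pi, \omega)
\]
is an isomorphism. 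Following Ranicki's algebraic theory of surgery, the obstruction $\sigma_n^s$ factors through $A$: for $(W,n) \in \{(Z, 4), (Z \times I, 5)\}$, Poincar\'e--Lefschetz duality (twisted by $\omega$ in the non-orientable case) identifies $\cN_\TOP(W, \bdry W) \cong H_n(Z; \underline{\bL}_\Z\langle 1\rangle)$, and
\[
\sigma_n^s \;:\; H_n(Z; \underline{\bL}_\Z\langle 1\rangle) \xra{~j_n~} H_n(Z; \underline{\bL}_\Z^\infdec) \xra{~A~} L_n^s(\Z\pi, \omega).
\]

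The cofiber spectrum $\underline{\bL}_\Z^\infdec / \underline{\bL}_\Z\langle 1\rangle$ has homotopy concentrated in degrees $q \leq 0$, so its Atiyah--Hirzebruch spectral sequence over the $4$-dimensional $Z$ forces $H_5(Z; \underline{\bL}_\Z^\infdec / \underline{\bL}_\Z\langle 1\rangle) = 0$. The long exact sequence then yields that $j_4$ is injective and $j_5$ is an isomorphism, so composing with the iso $A$ makes $\sigma_4^s$ injective and $\sigma_5^s$ surjective. Theorem~\ref{thm:rigidity} concludes that $Z$ is topologically $s$-rigid and has class $SES^s_+$; since $\Wh_1(\pi) = 0$ identifies the $s$- and $h$-decorated structure sets and $L$-groups, $Z$ equivalently has class $SES^h_+$. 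The step requiring the most care is Ranicki's factorization of $\sigma_n^s$ through the assembly map---a purely algebraic statement, independent of $NDL$---which is precisely what lets four-dimensional rigidity be harvested from high-dimensional Farrell--Jones technology without performing any four-dimensional surgery.
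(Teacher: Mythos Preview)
Your proof is correct and follows essentially the same strategy as the paper: verify the hypotheses of Theorem~\ref{thm:rigidity} by identifying $\sigma_n^s$ with the Ranicki assembly map (via \cite[Prop.~18.3(1)]{RanickiTOP} and topological transversality), use asphericity and $FJ_L$ to make the non-connective assembly an isomorphism, and then control the passage from connective to non-connective coefficients using that $Z$ is $4$-dimensional. The paper compresses the last step into the single assertion that $A_4^\Gamma$ is injective and $A_5^\Gamma$ is an isomorphism because $\cdim(\Gamma)=4$, whereas you unpack it via the cofiber $\underline{\bL}_\Z^\infdec/\underline{\bL}_\Z\langle 1\rangle$ and the Atiyah--Hirzebruch spectral sequence; you are also more explicit about torsionfreeness (Milnor--Swan) and the Rothenberg identification $L^s \cong L^h \cong L^\infdec$ from the $K$-flatness part of $FJ_L$, points the paper leaves implicit.
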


\begin{exm}
Topological $s$-rigidity occurs if $Z-\bdry Z$ is complete finite-volume hyperbolic. That is, $Z - \bdry Z = \R^4/\Gamma$ for some torsionfree lattice $\Gamma$ in $\Isom(\bbH^4)$.
\end{exm}

\begin{exm}
A non-Riemannian example of topological $s$-rigidity is the closed 4-manifold $Z$ of M.~Davis \cite{DavisM_aspherical}.  The universal cover $\widetilde{Z}$ is a complete  $\mathrm{CAT}(0)$ metric space. Most strikingly, $\widetilde{Z}$ is contractible but not homeomorphic to $\R^4$.
\end{exm}

The next example involves multiple citations, so we give a formal proof later. Currently, due to $\Nil$ summands, it is unknown if its Whitehead group vanishes.

\begin{cor}\label{cor:mappingtorus}
Suppose $Z$ is the mapping torus of a homeomorphism of an aspherical closed 3-manifold $K$. If $\Wh_1(\pi_1 Z)=0$ then $Z$ is topologically $s$-rigid.
\end{cor}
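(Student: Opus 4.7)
My plan is to verify the hypotheses of Theorem~\ref{thm:rigidity} for $Z$. Write $\pi := \pi_1(Z) = \pi_1(K) \rtimes_{\phi_*} \Z$ and $\omega := w_1(\tau_Z)$; the assumed vanishing $\Wh_1(\pi) = 0$ already supplies one hypothesis, so it remains to show $\sigma_4^s \colon \cN_\TOP(Z) \to L_4^s(\pi,\omega)$ is injective and $\sigma_5^s \colon \cN_\TOP(Z \times I) \to L_5^s(\pi,\omega)$ is surjective. First I would observe that $Z$ is aspherical: since $K$ is aspherical, the homotopy exact sequence of the fibration $K \hookrightarrow Z \to S^1$ gives $\pi_n(Z) = 0$ for all $n \geq 2$, hence $Z \simeq B\pi$.

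Next I would establish the L-theoretic Farrell--Jones isomorphism for $\pi$. By Perelman's geometrization, $K$ admits a JSJ decomposition into hyperbolic and Seifert-fibered pieces; each such piece has fundamental group of class $FJ_L$ in the L-theoretic sense, by the CAT(0)/hyperbolic results of Bartels--L\"uck and the virtually polycyclic case of Bartels--Farrell--L\"uck. Closure of the L-theoretic Farrell--Jones conjecture under amalgamated products over virtually cyclic edge groups (Bartels--L\"uck) and under extensions by $\Z$ (the fibered version) then yields the conjecture for $\pi$. Combined with $\Wh_1(\pi) = 0$, the Rothenberg sequence identifies $L^s_*(\pi,\omega)$ with $L^h_*(\pi,\omega)$, and the classical surgery assembly becomes an isomorphism onto these groups in the relevant low degrees. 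Since $Z = B\pi$ is a closed aspherical 4-manifold, Poincar\'e duality identifies $\cN_\TOP(Z)$ and $\cN_\TOP(Z \times I)$ with connective $\bL\langle 1 \rangle$-homology of $Z$ in degrees $4$ and $5$; the discrepancy with periodic $\bL$-homology lives in $H^0(Z;\Z)$ and $H^1(Z;\Z)$, and is accounted for by the signature homomorphism in degree $4$ and by the Shaneson-split summand $L^h_4(\pi_1 K)$ arising from $\pi \twoheadrightarrow \Z$ in degree $5$, yielding respectively the injectivity of $\sigma_4^s$ and surjectivity of $\sigma_5^s$.

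The principal obstacle is this decoration comparison: the L-theoretic Farrell--Jones conjecture is most naturally stated at the $L^{\infdec}$-level, whereas Theorem~\ref{thm:rigidity} requires $L^s$-information, and we have only $\Wh_1(\pi) = 0$ rather than the full $K$-flatness needed for $\pi \in FJ_L$ (indeed, $K$-flatness is precisely the source of the $\Nil$-summand concern noted just before the corollary). Consequently Corollary~\ref{cor:algebraic_rigidity} is unavailable, and the argument must carefully check that the contributions of $\widetilde K_0(\Z\pi)$ and negative $K$-theory to the Rothenberg spectral sequence do not interfere at the finite degrees relevant to a 4-dimensional target. Once this is done, Theorem~\ref{thm:rigidity} applies and yields topological $s$-rigidity of $Z$.
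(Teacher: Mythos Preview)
Your overall strategy---verify the hypotheses of Theorem~\ref{thm:rigidity}---matches the paper's, and you correctly note that $Z$ is aspherical and that the decoration comparison is the crux. But the route you propose for that comparison does not close, and the paper takes a different one.

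You attempt to establish the $L$-theoretic Farrell--Jones isomorphism for $\pi = \pi_1(Z)$ itself, then pass from $L^{\infdec}$ to $L^s$ via Rothenberg. As you yourself observe, this requires control over $\widetilde{K}_0(\Z\pi)$ and $K_{-i}(\Z\pi)$, which you do not have: the hypothesis $\Wh_1(\pi)=0$ gives only $L^s(\pi) = L^h(\pi)$, and there is no dimensional reason the remaining Tate terms should vanish in degrees $4$ and $5$. Your final paragraph names the obstacle but does not surmount it, so the proposal as written is incomplete.

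The paper's proof avoids this entirely by applying the full $FJ_L$ hypothesis (including $K$-flatness) not to $\Gamma = \pi$ but to $\Gamma_0 = \pi_1(K)$, citing Bartels--Farrell--L\"uck for $3$-manifold groups. Since $\Gamma_0$ is torsionfree and $K$-flat, \emph{all} decorations on $L_*(\Gamma_0)$ coincide, so the non-connective assembly $A_*^{\Gamma_0}$ is an isomorphism at the $s$-level. One then compares the Wang sequence in $\bL$-homology for the fibration $K \to Z \to S^1$ with the Shaneson sequence
\[
L_n^s(\Gamma_0) \xra{1-\alpha_*} L_n^s(\Gamma_0) \xra{i_*} L_n^s(\Gamma) \xra{\bdry} L_{n-1}^h(\Gamma_0) = L_{n-1}^s(\Gamma_0),
\]
and the five lemma gives that $A_*^\Gamma$ is an isomorphism onto $L_*^s(\Gamma)$ directly, with no Rothenberg bookkeeping for $\Gamma$ required. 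The connective-to-nonconnective passage and the identification with $\sigma_4^s$, $\sigma_5^s$ then proceed as in Corollary~\ref{cor:algebraic_rigidity}. The moral: push the $FJ_L$ input down to the fiber group, where $K$-flatness is known, and let Shaneson carry the decoration up to $\Gamma$.
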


Now, let us pass to connected sums, which fail to be aspherical if there is more than one factor. The next statement shall follow from Theorems~\ref{thm:main} and \ref{thm:rigidity}. Below, we write $\cdim(G)$ for the cohomological dimension of any discrete group $G$.

\begin{cor}\label{cor:connectsum_vanishingsecondhomotopy}
Let $n > 0$. For each $1 \leq i \leq n$, let $X_i$ be a compact oriented topological 4-manifold. If each fundamental group $\Gamma_i := \pi_1(X_i)$ is torsionfree  of class $FJ_L$ with $\cdim(\Gamma_i) \leq 4$, and each mod-two second homotopy group vanishes: $\pi_2(X_i) \otimes \Z_2 = 0$, then the connected sum $X := X_1 \# \cdots \# X_n$ is topologically $s$-rigid.
\end{cor}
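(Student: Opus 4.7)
The plan is to first verify each $X_i$ is topologically $s$-rigid via Theorem~\ref{thm:rigidity}, then conclude for the connected sum $X$ via Theorem~\ref{thm:main}(2).

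Fix $i$. Since $\Gamma_i \in FJ_L$ is torsionfree and $K$-flat, $\Wh_1(\Gamma_i) = 0$ and the Farrell--Jones assembly identifies $L_n^s(\Gamma_i) \cong H_n(B\Gamma_i; \bL^\infdec)$. Under the Ranicki identifications $\cN_\TOP(X_i) = H_4(X_i; \bL\langle 1\rangle)$ and $\cN_\TOP(X_i \times I) = H_5(X_i; \bL\langle 1\rangle)$, where $\bL\langle 1\rangle$ denotes the $1$-connective cover of $\bL^\infdec$, the surgery obstruction factors through the classifying map $f_i: X_i \to B\Gamma_i$ as
\[
H_n(X_i; \bL\langle 1\rangle) \xrightarrow{(f_i)_*} H_n(B\Gamma_i; \bL\langle 1\rangle) \longrightarrow H_n(B\Gamma_i; \bL^\infdec) = L_n^s(\Gamma_i).
\]
The cofiber $\bL^\infdec/\bL\langle 1\rangle$ has non-positive homotopy, so $\cdim(\Gamma_i) \leq 4$ forces $H_k(B\Gamma_i; \bL^\infdec/\bL\langle 1\rangle) = 0$ for all $k \geq 5$; thus the second arrow is an isomorphism for $n \in \{4, 5\}$. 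The hypothesis $\pi_2(X_i) \otimes \Z_2 = 0$ annihilates the columns $E^2_{p, 2} = H_p(\Gamma_i; \pi_2(X_i) \otimes \Z_2)$ of the Serre spectral sequence of $f_i$ (fiber $\widetilde{X_i}$), making $(f_i)_* : H_p(X_i; \Z_2) \to H_p(B\Gamma_i; \Z_2)$ an isomorphism for $p = 2$ and a surjection for $p = 3$. Together with the $\pi_1$-induced isomorphism on $H_1(\cdot; \Z)$, a naturality comparison of the Atiyah--Hirzebruch spectral sequences for $\bL\langle 1\rangle$-homology shows $(f_i)_*$ is an iso for $n = 4$ and a surjection for $n = 5$. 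Hence $\sigma_4^s$ is injective and $\sigma_5^s$ is surjective, so Theorem~\ref{thm:rigidity} yields that each $X_i$ is topologically $s$-rigid and of class $SES^h_+$.

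Theorem~\ref{thm:main}(2) now gives $\cS^h_\TOP(X) \cong \prod_{i=1}^n \cS^h_\TOP(X_i)$. Each factor equals $\cS^s_\TOP(X_i) = *$ (using $\Wh_1(\Gamma_i) = 0$ and the established $s$-rigidity), so $\cS^h_\TOP(X) = *$. Waldhausen's theorem on $\Wh_1$ of free products, in which the $\widetilde{\Nil}$ summands vanish across the trivial amalgamating subgroup, yields $\Wh_1(\pi_1 X) = \bigoplus_i \Wh_1(\Gamma_i) = 0$. Therefore $\cS^s_\TOP(X) = \cS^h_\TOP(X)$ is a singleton, and $X$ is topologically $s$-rigid by definition.

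The main obstacle is the naturality comparison for $(f_i)_*$. The two hypotheses are sharply calibrated for this: $\cdim(\Gamma_i) \leq 4$ kills $H_*(B\Gamma_i; \bL^\infdec/\bL\langle 1\rangle)$ in exactly the degrees where it would obstruct passage between connective and non-connective $\bL$-assembly, while $\pi_2(X_i) \otimes \Z_2 = 0$ kills the $\pi_2$-columns of the Serre spectral sequence of $f_i$ that would otherwise obstruct the required $\Z_2$-homological behaviour of $(f_i)_*$ in degrees $2$ and $3$.
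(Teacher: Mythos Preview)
Your proof is correct and follows essentially the same strategy as the paper: verify injectivity of $\sigma_4$ and surjectivity of $\sigma_5$ for each $X_i$ by factoring through the classifying map $f_i:X_i\to B\Gamma_i$, invoke Theorem~\ref{thm:rigidity} to get $s$-rigidity and $SES^h_+$ for each piece, then apply Theorem~\ref{thm:main}(2) and Stallings/Waldhausen for $\Wh_1$ of the free product. The paper packages the homological step via the Taylor--Williams splitting of $\bL$-homology and the Hopf exact sequence, whereas you phrase it as a Serre/Atiyah--Hirzebruch comparison; these are the same argument in different clothing.

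One small correction: the map $H_4(B\Gamma_i;\bL\langle 1\rangle)\to H_4(B\Gamma_i;\bL^{-\infty})$ is only \emph{injective}, not an isomorphism. Your vanishing $H_k(B\Gamma_i;\bL^{-\infty}/\bL\langle 1\rangle)=0$ for $k\ge 5$ kills the incoming boundary $H_5\to H_4$, but $H_4$ of the cofiber can receive a nonzero contribution from $E^2_{4,0}=H_4(B\Gamma_i;L_0(\Z))$ when $\mathrm{cdim}(\Gamma_i)=4$. This does not affect your conclusion, since injectivity is all that is required for $\sigma_4^s$.
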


Next, we illustrate the basic but important example of non-aspherical oriented factors $X_i=S^1 \times S^3$. Here, the connected sum $X$ has free fundamental group $F_n$.

\begin{exm}
Let $n>0$. Recall $\Wh_1(\Z)=0$. Then, by Corollary~\ref{cor:connectsum_vanishingsecondhomotopy}, the closed 4-manifold $X = \# n(S^1 \times S^3)$ has class $SES^h_+$ and is topologically $s$-rigid.
\end{exm}

Finally, we specialize Corollary~\ref{cor:connectsum_vanishingsecondhomotopy} to the setting of the Borel conjecture.

\begin{cor}
Let $n > 0$. For each $1 \leq i \leq n$, suppose $X_i$ is an aspherical compact oriented topological 4-manifold with fundamental group $\Gamma_i := \pi_1(X_i)$ of class $FJ_L$. Then the connected sum $X := X_1 \# \cdots \# X_n$ is topologically $s$-rigid.\qed
\end{cor}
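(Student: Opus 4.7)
The plan is to deduce this corollary directly from Corollary~\ref{cor:connectsum_vanishingsecondhomotopy} by checking its four hypotheses on each summand $X_i$. The substantive work is already contained in the earlier results: Theorem~\ref{thm:main} supplies the connected-sum decomposition of the structure set, Theorem~\ref{thm:rigidity} converts vanishing of that set into $s$-rigidity, and the $FJ_L$ hypothesis drives the relevant $L$-theory assembly maps. So my task here is purely to translate the aspherical hypothesis into the technical conditions required by Corollary~\ref{cor:connectsum_vanishingsecondhomotopy}: torsionfreeness of $\Gamma_i$, membership in $FJ_L$, $\cdim(\Gamma_i) \leq 4$, and vanishing of $\pi_2(X_i) \otimes \Z_2$.

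Two of the four conditions I would dispatch immediately from asphericity. First, since $X_i$ is a $K(\Gamma_i,1)$, we have $\pi_2(X_i) = 0$, so the mod-two condition on $\pi_2$ holds trivially. Second, the closed aspherical 4-manifold $X_i$ itself is a finite 4-dimensional model for $K(\Gamma_i, 1)$, hence $\cdim(\Gamma_i) \leq 4$. The hypothesis $\Gamma_i \in FJ_L$ is given directly.

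The one step that deserves explicit attention is torsionfreeness of $\Gamma_i$. Here I would argue by contradiction: if $\Gamma_i$ contained a nontrivial finite subgroup $H$, then $H$ would act freely and properly discontinuously on the contractible universal cover $\widetilde{X_i}$, so the quotient $\widetilde{X_i}/H$ would be a finite-dimensional manifold model for $K(H,1)$, forcing $\cdim_\Z(H) < \infty$. But every nontrivial finite group has periodic Tate cohomology and therefore infinite integral cohomological dimension, a contradiction. Hence $\Gamma_i$ is torsionfree, and all hypotheses of Corollary~\ref{cor:connectsum_vanishingsecondhomotopy} are in place; that corollary then concludes that the connected sum $X = X_1 \# \cdots \# X_n$ is topologically $s$-rigid.

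I do not anticipate any genuine obstacle at this stage, since the statement is a clean specialization of the previous corollary and the heavy lifting is done elsewhere. The only mild care required is in the torsionfreeness argument; everything else is formal.
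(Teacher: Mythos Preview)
Your proposal is correct and matches the paper's approach: the paper gives no proof at all (marked \qed), treating the statement as an immediate specialization of Corollary~\ref{cor:connectsum_vanishingsecondhomotopy}, and your verification of the hypotheses is exactly what is implicit there.

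One minor correction: it is not true that every nontrivial finite group has periodic Tate cohomology (for instance $C_p \times C_p$ does not). The right argument is that any nontrivial finite $H$ contains a cyclic subgroup $C_p$ of prime order, and $H^{2k}(C_p;\Z) \cong \Z/p$ for all $k \geq 1$, so $\cdim(C_p) = \infty$; since cohomological dimension is monotone under subgroups, $\cdim(H) = \infty$ as well. Your conclusion and the overall torsionfreeness argument are unaffected.
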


Here is an outline of the rest of the paper.  Foundations are laid in Sections~\ref{sec:structuresets}--\ref{sec:Weinberger}, where we expand work of Cappell and Weinberger in dimension four.
Applications are made in Sections~\ref{sec:main_proofs}--\ref{sec:rigidity_proofs}, where we prove the stated results of the Introduction. The reader may find most of our notation and terminology in Kirby--Taylor \cite{KT}.

\subsection*{Acknowledgments}

The author thanks Masayuki Yamasaki for his kind hospitality.
This paper was conceived in May 2009 at the Okayama University of Science.
Jonathan Hillman and Andrew Ranicki provided expert e-mail consultation.
Finally, the author is grateful to the Hausdorff Research Insititute for Mathematics.
Through discussions there in October 2009 at the Rigidity Trimester Program, the earlier results on topological $s$-rigidity were extended to the class of $FJ_L$ summands.
Research funding was provided in part by NSF grants DMS-0353640 (RTG) and DMS-0904276.
The referee is appreciated for several clarifying comments.

\section{The language of structure sets}\label{sec:structuresets}

To start, the following equivalence relations play prominent roles in Section~\ref{sec:Weinberger}.

\begin{defn}
Let $Z$ be a topological space.
Let $M, M'$ be compact topological manifolds.
Let $h: M \to Z$ and $h': M' \to Z$ be continuous maps.
A \textbf{bordism $H: h \to h'$ rel $\bdry$} is a compact topological cobordism $(W;M,M')$ rel $\bdry$ and a continuous map $|H|: W \to Z \times I$ such that $H|_M=h$ and $H|_{M'}=h'$.
We call $H: h \to h'$ a \textbf{$h$-bordism rel $\bdry$} (resp.~\textbf{$s$-bordism rel $\bdry$}) if $(W;M,M')$ is an $h$-cobordism (resp.~$s$-cobordism).
\end{defn}

Next, we relativize the surgical language in the Introduction (cf.~\cite{Wall}).

\begin{defn}
Let $Z$ be a topological manifold such that the boundary $\bdry Z$ is collared. Let $\bdry_0 Z$ be a union of components of $\bdry Z$.  The pair $(Z,\bdry_0 Z)$ is called a \textbf{$\TOP$ manifold pair}. Write $\bdry_1 Z := \bdry Z - \bdry_0 Z$. The induced triple $(Z;\bdry_0 Z,\bdry_1 Z)$ is an example of a \textbf{$\TOP$ manifold triad} (in other words, a cobordism).
\end{defn}

Here is the precise definition of the relative structure set that we use in proofs.

\begin{defn}
Let $(Z,\bdry_0 Z)$ be a compact $\TOP$ 4-manifold pair.
Write $\Gamma_0 := \pi_1(\bdry_0 Z)$, the fundamental groupoid of $\bdry_0 Z$.
The \textbf{structure set $\cS_\TOP^h(Z,\bdry_0 Z)$} consists of $\sim$-equivalence classes of continuous maps $(h; \bdry_0 h, \bdry_1 h): (M;\bdry_0 M,\bdry_1 M) \to (Z;\bdry_0 Z, \bdry_1 Z)$ of compact $\TOP$ 4-manifolds triads such that:
\begin{itemize}
\item
 $h: M \to Z$ is a homotopy equivalence,

\item
$\bdry_0 h: \bdry_0 M \to \bdry_0 Z$ is a $\Z[\Gamma_0]$-homology equivalence, and

\item
$\bdry_1 h: \bdry_1 M \to \bdry_1 Z$ is a homeomorphism.
\end{itemize}
We call such $(h, \bdry_0 h): (M, \bdry_0 M) \to (Z, \bdry_0 Z)$ a \textbf{homotopy--homology equivalence}.
Here, $h \sim h'$ if there exists a $\TOP$ bordism $(H;\bdry_0 H, \bdry_1 H): (h;\bdry_0 h,\bdry_1 h) \to (h';\bdry_0 h', \bdry_1 h')$ such that:
\begin{itemize}
\item $H: W \to Z \times I$ is a homotopy equivalence,

\item
$\bdry_0 H: \bdry_0 W \to \bdry_0 Z \times I$ is a $\Z[\Gamma_0]$-homology equivalence, where $(\bdry_0 W; \bdry_0 M, \bdry_0 M')$ is a cobordism, and

\item
$\bdry_1 H: \bdry_1 W \to \bdry_1 Z \times I$ is a homeomorphism, where $(\bdry_1 W; \bdry_1 M, \bdry_1 M')$ is a cobordism.
\end{itemize}
We call such $(H,\bdry_0 H): (h,\bdry_0 h) \to (h',\bdry_0 h')$ a \textbf{homotopy--homology $h$-bordism}.
\end{defn}

The 4-dimensional relative surgery sequence is defined carefully as follows.
It is an $h$-version of Wall's sequence (middle of \cite[p.~115]{Wall}) with homotopy equivalences to $\bdry_0 Z$ and with homeomorphisms to $\bdry_1 Z$.

\begin{defn}
Let $(Z,\bdry_0 Z)$ be a compact $\TOP$ 4-manifold pair. Denote the fundamental groupoids $\Gamma := \pi_1(Z)$ and $\Gamma_0 := \pi_1(\bdry_0 Z)$. Denote the orientation character $\omega := w_1(\tau_Z): \Gamma \to \{\pm 1\}$. We declare that \textbf{$(Z,\bdry_0 Z)$ has class $SES^h$} if there exists an exact sequence of based sets:
\[
\cN_\TOP(Z \times I, \bdry_0 Z \times I) \xrightarrow{~\sigma_5^h~} L_5^h(\Gamma,\Gamma_0,\omega) \xrightarrow{~\bdry~} \cS_\TOP^h(Z, \bdry_0 Z) \xrightarrow{~\eta~} \cN_\TOP(Z, \bdry_0 Z) \xrightarrow{~\sigma_4^h~} L_4^h(\Gamma,\Gamma_0,\omega).
\]
\end{defn}

Last is the enhancement to include actions of certain groups in $K$- and $L$-theory.

\begin{defn}\label{defn:SESplus}
In addition, we declare that \textbf{$(Z,\bdry_0 Z)$ has class $SES^h_+$} if, for all elements $h \in \cS_\TOP^h(Z,\bdry_0 Z)$ and $t \in \Wh_1(\Gamma)$ and $x \in L_5^h(\Gamma,\Gamma_0,\omega)$, there exist:
\begin{itemize}
\item an action of the group $\Wh_1(\Gamma)$ on the set $\cS_\TOP^h(Z,\bdry_0 Z)$ such that:
\begin{itemize}
\item
there is an $h$-bordism $F: W \to Z \times I$ rel $\bdry$ from $h: M \to Z$ to $t(h): M' \to Z$ with Whitehead torsion $\tau(W;M,M')=t$, and
\end{itemize}

\item an action of the group $L_5^h(\Gamma,\Gamma_0,\omega)$ on the set $\cS_\TOP^h(Z,\bdry_0 Z)$ such that:
\begin{itemize}
\item there exists a normal bordism $F$ from $h$ to $x(h)$ with $\sigma_5^h(F) = x$, and
\item the equation $\bdry(x) = x(\id_Z)$ holds.
\end{itemize}
\end{itemize}
\end{defn}

Before moving on, we consider the stable version of the above structure set.

\begin{defn}
Let $(Z,\bdry_0 Z)$ be a compact $\TOP$ 4-manifold pair.
The \textbf{stable structure set $\ol{\cS}_\TOP^h(Z,\bdry_0 Z)$} consists of $\ol{\sim}$-equivalence classes of homotopy--homology equivalences $h: (M, \bdry_0 M) \to (Z\# r(S^2 \times S^2), \bdry_0 Z)$ for any $r \geq 0$.
Here, we define $h \ol{\sim} h'$ if there exist $s, s' \geq 0$ and a homotopy--homology $h$-bordism $H: h \# \id_{s(S^2 \times S^2)} \to h' \# \id_{s'(S^2 \times S^2)}$.
\end{defn}

The next theorem was proven by S.~Cappell and J.~Shaneson \cite{CS1} (cf.~\cite{KT}) and reformulated here.

\begin{thm}[Cappell--Shaneson]\label{thm:CappellShaneson}
Let $(Z,\bdry_0 Z)$ be a compact $\TOP$ 4-manifold pair. Denote the fundamental groupoids $\Gamma := \pi_1(Z)$ and $\Gamma_0 := \pi_1(\bdry_0 Z)$ and orientation character $\omega: \Gamma \to \{\pm 1\}$.  Then there is an exact sequence of based sets:
\[
\cN_\TOP(Z \times I, \bdry_0 Z \times I) \xrightarrow{~\sigma_5^h~} L_5^h(\Gamma,\Gamma_0,\omega) \xrightarrow{~\bdry~} \ol{\cS}_\TOP^h(Z, \bdry_0 Z) \xrightarrow{~\eta~} \cN_\TOP(Z, \bdry_0 Z) \xrightarrow{~\sigma_4^h~} L_4^h(\Gamma,\Gamma_0,\omega).
\]
The group $L_5^h(\Gamma,\Gamma_0,\omega)$ acts on the set $\ol{\cS}^h_\TOP(Z,\bdry_0 Z)$ in such a way that the above map $\bdry$ is equivariant.
\end{thm}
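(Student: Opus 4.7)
The plan is to mimic Wall's high-dimensional surgery exact sequence for the relative pair $(Z, \bdry_0 Z)$, exploiting that $Z \times I$ is 5-dimensional while allowing arbitrary $(S^2 \times S^2)$-stabilization of 4-dimensional source manifolds to repair the usual failures of surgery in dimension four. The maps involving 5-dimensional pieces of the sequence come from classical theory: $\sigma_5^h$ is the surgery obstruction of a 5-dimensional normal map of $(Z \times I, \bdry_0 Z \times I, \bdry_1 Z \times I)$, and the action of $L_5^h$ on $\ol{\cS}^h_\TOP$ arises from Wall's realization theorem in dimension 5. Explicitly, given a stable structure $h: M \to Z \# r(S^2 \times S^2)$ and $x \in L_5^h(\Gamma, \Gamma_0, \omega)$, apply realization to the collar $M \times I$ to construct a 5-dimensional normal bordism $(W; M, M')$ with surgery obstruction $x$, and set $x \cdot h := h': M' \to Z \# r(S^2 \times S^2)$. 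Defining $\bdry(x) := x \cdot \id_Z$ builds equivariance in by construction.

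The heart of the proof is exactness at $\cN_\TOP(Z, \bdry_0 Z)$. The inclusion $\Img(\eta) \subseteq \Ker(\sigma_4^h)$ is automatic, since homotopy--homology equivalences have trivial surgery obstruction. For the converse, given a normal map $f: M \to Z$ with $\sigma_4^h(f) = 0 \in L_4^h(\Gamma, \Gamma_0, \omega)$, one first performs surgery below the middle dimension to make $f$ 2-connected. The key Cappell--Shaneson stabilization step then asserts that the residual middle-dimensional surgery kernel is stably hyperbolic, and that its generators can be represented by topologically framed embedded 2-spheres inside $M \# r(S^2 \times S^2)$ coming from the stabilizing $S^2 \times S^2$ factors. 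This requires no $NDL$ hypothesis because all the relevant embeddings are performed inside simply-connected $S^2 \times S^2$ summands, where Freedman's foundational topological handle theory applies directly. Surgery then completes to a homotopy--homology equivalence $M \# r(S^2 \times S^2) \to Z \# r(S^2 \times S^2)$, where stabilizing $Z$ is precisely what allows the construction to land in $\ol{\cS}^h_\TOP$.

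Exactness at the remaining two positions follows by standard translations of the Wall apparatus. For exactness at $\ol{\cS}^h_\TOP(Z, \bdry_0 Z)$: the inclusion $\eta \circ \bdry = \eta(\id_Z)$ is immediate since Wall realization preserves normal invariants. Conversely, given $h: M \to Z$ with normal invariant agreeing with $\eta(\id_Z)$ after stabilization, choose a normal bordism $F: W \to Z \times I$ from $\id_Z$ to $h$, and set $x := \sigma_5^h(F) \in L_5^h(\Gamma, \Gamma_0, \omega)$; then $\bdry(x) = [h]$ in $\ol{\cS}^h_\TOP$ by the very definition of the action. For exactness at $L_5^h$: if $\bdry(x) = [\id_Z]$, then the realizing 5-dimensional normal bordism is homotopy--homology $h$-bordant rel boundary to a product cobordism, and its union with that product is a normal cobordism of $Z \times I$ with both ends trivial whose surgery obstruction equals $x$.

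The main obstacle is the 4-dimensional stabilization at $\cN_\TOP(Z, \bdry_0 Z)$: one must control the middle-dimensional kernel topologically after connect-summing with enough copies of $S^2 \times S^2$. This is Cappell--Shaneson's central innovation, realized in the topological category by noting that $(S^2 \times S^2)$-summands furnish embedded framed 2-spheres representing hyperbolic pairs in $L_4^h$, killing any fixed 4-dimensional surgery obstruction. Once this is established, the remaining exactness statements follow mechanically from the relative high-dimensional surgery machinery, the only subtlety being that one must maintain the $(S^2 \times S^2)$-stabilization consistently across all four maps, which is handled by the definition of $\ol{\sim}$ permitting different numbers of summands on the two ends of any bordism.
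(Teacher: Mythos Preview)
The paper does not supply a proof of this theorem; it is stated as a known result attributed to Cappell and Shaneson (with a cross-reference to Kirby--Taylor) and is invoked as a black box, most notably in the proof of Theorem~\ref{thm:generalized_weinberger}(2). There is therefore no proof in the paper to compare your proposal against.

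That said, your sketch is a reasonable outline of the standard argument. One small correction: you attribute the embedding of framed 2-spheres in the stabilized manifold to ``Freedman's foundational topological handle theory,'' but the Cappell--Shaneson stabilization argument predates Freedman and is at heart a smooth argument. The mechanism is rather that the Norman trick (cf.\ the paper's own use of it in the proof of Theorem~\ref{thm:generalized_weinberger}(2)) resolves double-point singularities of immersed Whitney 2-discs at the cost of connect-summing with copies of $S^2 \times S^2$, so the Whitney trick is never needed---this, not any appeal to simply-connected topological surgery, is why no $NDL$ hypothesis enters. Your description of the $L_5^h$-action via Wall realization on $M \times I$ and of exactness at the remaining terms is correct in spirit, though in a full proof one must also verify that the action is well-defined on $\ol{\sim}$-classes (i.e., compatible with further stabilization), which you gesture at in your last sentence but do not carry out.
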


\section{A Weinberger-type homology splitting theorem}\label{sec:Weinberger}

Now we are ready to improve the $\Lambda$-splitting theorem of S.~Weinberger \cite{Weinberger_fibering} by  slightly modifying his proof.  In essence Theorems~\ref{thm:main} \& \ref{thm:main_stable} shall be its corollaries.

\begin{defn}
In the setting below, the homotopy equivalence $h: M \to X$ is \textbf{$\Z[\Gamma_0]$-split} if $h$ is topologically transverse to $X_0$ and its restriction $h_0: h^{-1}(X_0) \to X_0$ is a $\Z[\Gamma_0]$-homology equivalence (hence $h-h_0: h^{-1}(X-X_0) \to X-X_0$ is also).
\end{defn}

\begin{thm}\label{thm:generalized_weinberger}
Let $X$ be a non-empty compact connected topological 4-manifold. Let $X_0$ be a closed connected incompressible separating topological 3-submanifold of $X$. The decomposition of manifolds $X = X_1 \cup_{X_0} X_2$ induces the decomposition of fundamental groups $\Gamma = \Gamma_1 *_{\Gamma_0} \Gamma_2$. Define a closed simply-connected 8-manifold
\[
Q := \CP^4 \# (S^3 \times S^5) \# (S^3 \times S^5).
\]
Let $M$ be a compact topological 4-manifold. Suppose $h: M \to X$ is a homotopy equivalence such that the restriction $\bdry h: \bdry M \to \bdry X$ is a homeomorphism.
\begin{enumerate}
\item
Assume $(*)$: the group $\Gamma_0$ has class $NDL$ and the 4-manifold pairs $(X_1,X_0)$ and $(X_2,X_0)$ have class $SES^h_+$.
Then $h$ is topologically $s$-bordant rel $\bdry M$ to a homotopy equivalence $h''': M''' \to X$ $\Z[\Gamma_0]$-split along $X_0$ if and only if $h \times \id_Q$ is homotopic rel $\bdry M \times Q$ to a homotopy equivalence split along $X_0 \times Q$.

\item
Do not assume Hypothesis $(*)$. Then, for some $r \geq 0$, the $r$-th stabilization $h \# \id_{r(S^2 \times S^2)}$ is homotopic rel $\bdry M$ to a homotopy equivalence $h''': M''' \to X\# r(S^2 \times S^2)$ $\Z[\Gamma_0]$-split along $X_0$ if and only if $h \times \id_Q$ is homotopic rel $\bdry M \times Q$ to a homotopy equivalence split along $X_0 \times Q$.
\end{enumerate}
Moreover, there is an analogous statement if $X_0$ is two-sided and non-separating.
\end{thm}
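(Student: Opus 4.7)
The plan is to handle the two directions of the biconditional separately, with Weinberger's trick of crossing with the simply-connected auxiliary eight-manifold $Q$ serving to import high-dimensional surgery theory into the four-dimensional splitting problem. The ``only if'' direction is immediate: given a topological $s$-cobordism $W$ rel $\bdry M$ from $h$ to a $\Z[\Gamma_0]$-split homotopy equivalence $h'''$, the product $W \times Q$ is a thirteen-dimensional $s$-cobordism rel $\bdry M \times Q$, hence a topological product by the high-dimensional $s$-cobordism theorem; the product structure supplies the desired homotopy from $h \times \id_Q$ to $h''' \times \id_Q$ rel $\bdry M \times Q$, and the target is manifestly split along $X_0 \times Q$.

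For the ``if'' direction of part~(1), I first deform $h$ to be topologically transverse to $X_0$ via Freedman--Quinn transversality in dimension four, producing degree-one normal maps of pairs $h_i : (M_i, M_0) \to (X_i, X_0)$ with common boundary $h_0 : M_0 \to X_0$. The key computation is that both surgery obstructions $\sigma_4^h(h_i) \in L_4^h(\Gamma_i, \Gamma_0, \omega)$ vanish. After transversality, $h_i \times \id_Q$ and the restriction $g_i$ of any transverse split representative $g \simeq h \times \id_Q$ represent the same normal invariant in $\cN_\TOP(X_i \times Q, X_0 \times Q)$; since $g_i$ is a homotopy--homology equivalence by construction, $\sigma_{12}^h(g_i)$ vanishes, hence so does $\sigma_{12}^h(h_i \times \id_Q)$. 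The Ranicki product formula identifies $\sigma_{12}^h(h_i \times \id_Q)$ with the image of $\sigma_4^h(h_i)$ under multiplication by the symmetric signature $\sigma^*(Q) = \sigma(Q) = 1 \in L^8(\Z)$, and this multiplication coincides with the eight-fold $L$-theory periodicity isomorphism $L_4^h \xra{\cong} L_{12}^h$; hence $\sigma_4^h(h_i) = 0$.

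Exactness at $\cN_\TOP(X_i, X_0)$ in the $SES^h_+$ sequence for each $(X_i, X_0)$, supplied by hypothesis $(*)$, then produces homotopy--homology equivalences $\tilde h_i : (\tilde M_i, \tilde M_0) \to (X_i, X_0)$ together with pair normal cobordisms $V_i : h_i \to \tilde h_i$. Arranging the boundary surgery coherently so that $\tilde h_0$ is a common $\Z[\Gamma_0]$-homology equivalence for both sides, one glues $V_1$ and $V_2$ along the shared boundary cobordism $V_0 : h_0 \to \tilde h_0$ to produce a five-dimensional cobordism $V : h \to h'$ rel $\bdry M$, where $h' := \tilde h_1 \cup_{\tilde h_0} \tilde h_2$ is a $\Z[\Gamma_0]$-split homotopy equivalence by van~Kampen and Mayer--Vietoris. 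The main obstacle is upgrading $V$ to an $s$-cobordism: a priori the $V_i$ are only normal cobordisms, so $V$ carries a residual Wall obstruction $\sigma_5^h(V) \in L_5^h(\Gamma, \omega)$ and Whitehead torsion $\tau(V) \in \Wh_1(\Gamma)$. These are canceled by invoking the $L_5^h$- and $\Wh_1$-actions on $\cS_\TOP^h(X_i, X_0)$ afforded by the ``$+$'' part of $SES^h_+$ and assembling them along $\Gamma = \Gamma_1 *_{\Gamma_0} \Gamma_2$; the $NDL$ hypothesis on $\Gamma_0$ is essential here to control the Mayer--Vietoris gluing and to ensure that no residual Cappell $\UNil$ obstruction intervenes. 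The resulting $s$-bordism realizes $h''' := h'$.

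For part~(2), in the absence of $(*)$, I substitute the stable Cappell--Shaneson sequence (Theorem~\ref{thm:CappellShaneson}) for the $SES^h_+$ sequence at each use; since this sequence holds without any fundamental-group hypothesis, the argument goes through at the cost of an $r$-fold $(S^2 \times S^2)$-stabilization, and in the stable range $s$-bordism coincides with homotopy rel $\bdry$, yielding the stated conclusion for $h \# \id_{r(S^2 \times S^2)}$. For the two-sided non-separating case, where the amalgamated product is replaced by the HNN decomposition $\Gamma = \Gamma_1 *_{\Gamma_0}$, I cut $X$ along $X_0$ to obtain a compact 4-manifold with two boundary copies of $X_0$, apply the separating argument to this bordered manifold, and then reglue to recover the splitting of $h$.
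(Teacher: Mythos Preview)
Your outline has a genuine gap at the step where you ``cancel'' the residual obstruction $\sigma_5^h(V)\in L_5^h(\Gamma,\omega)$. The actions provided by $SES^h_+$ are factor-wise: $L_5^h(\Gamma_i,\Gamma_0,\omega)$ acts on $\cS_\TOP^h(X_i,X_0)$. To use them you must first show that the specific element $x=\sigma_5^h(V)$ lies in the image of $j_1\oplus j_2: L_5^h(\Gamma_1,\Gamma_0)\oplus L_5^h(\Gamma_2,\Gamma_0)\to L_5^h(\Gamma,\Gamma_0)$. Your appeal to Mayer--Vietoris and $\UNil$ does not do this: the hypothesis $\Gamma_0\in NDL$ says nothing about the vanishing of Cappell's $\UNil_*$, and even when $\UNil_5=0$ the map $j_1\oplus j_2$ need not be surjective. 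The paper exploits the splitting hypothesis a second time here: one makes the homotopy $H:h\times\id_Q\simeq g$ transverse to $X_0\times Q$, glues $F\times\id_Q$ to $H$, and reads off a geometric decomposition $x=j_1(x_1)+j_2(x_2)$ with $x_i\in L_{13}^h(\Gamma_i,\Gamma_0)\cong L_5^h(\Gamma_i,\Gamma_0)$ coming from the transverse pieces of $H$. Only then can one act by $-x_i$ on each side. The same mechanism controls the torsion: because $x$ lifts to $L_5^B(\Gamma,\Gamma_0)$ with $B=j_1\Wh_1(\Gamma_1)+j_2\Wh_1(\Gamma_2)$, the surgered $h$-cobordism has torsion in $B$, which can then be killed factor-wise by the $\Wh_1$-actions. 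You have not produced this decomposition.

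Two secondary issues. First, your gluing of $V_1$ and $V_2$ along a ``shared boundary cobordism $V_0$'' is not justified: exactness at $\cN_\TOP(X_i,X_0)$ yields some boundary bordism for each $i$, not a common one. The paper avoids this by attaching $W_1$ and $W_2$ to $M\times I$ along the original $M_1,M_2\subset M\times 1$, so the glue is the given $M_0$. Second, you have mislocated the role of $\Gamma_0\in NDL$: it is used not for $\UNil$ but to carry out five-dimensional relative surgery on the pair $(X\times I, X_0\times 1)$, which amounts to four-dimensional surgery over $X_0$ and requires $\Gamma_0$ good. Finally, in the ``only if'' direction, $h'''\times\id_Q$ is only $\Z[\Gamma_0]$-split, not split; one needs the handlebody plus construction in dimension~$11$ to upgrade the $\Z[\Gamma_0]$-homology equivalence over $X_0\times Q$ to a homotopy equivalence.
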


Note the map $\Gamma_0 \to \Gamma$ is injective, but the amalgam $\Gamma$ need not have class $NDL$. Observe the 8-manifold $Q$ has both Euler characteristic and signature equal to one.

\begin{cor}[Weinberger]\label{cor:weinberger}
In the previous theorem, instead of $(*)$, assume $(**)$: $\bdry X$ is empty and the fundamental group $\Gamma$ has class $NDL$. Then $h$ is homotopic to a $\Z[\Gamma_0]$-split homotopy equivalence along $X_0$ if and only if $h \times \id_Q$ is homotopic to a split homotopy equivalence along $X_0 \times Q$.
\end{cor}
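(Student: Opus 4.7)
The plan is to deduce Corollary~\ref{cor:weinberger} from Theorem~\ref{thm:generalized_weinberger}(1) via two reductions: first verify that the stronger hypothesis $(**)$ implies the technical hypothesis $(*)$, and then upgrade ``topologically $s$-bordant'' to ``homotopic'' using the 4-dimensional topological $s$-cobordism theorem of Freedman--Quinn.

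First I would verify $(*)$. Because $X_0 \subset X$ is incompressible and separating, Seifert--van Kampen identifies $\Gamma_0 \hookrightarrow \Gamma_i \hookrightarrow \Gamma$ as injective inclusions, the latter by the standard fact that the factor groups of an amalgamated product embed. Since the class $NDL$ is closed under taking subgroups, the assumption $\Gamma \in NDL$ forces $\Gamma_0, \Gamma_1, \Gamma_2 \in NDL$. Theorem~\ref{thm:FTKQ}, applied in the relative form to each compact connected 4-manifold pair $(X_i, X_0)$, then yields that $(X_1, X_0)$ and $(X_2, X_0)$ each have class $SES^h_+$. This establishes $(*)$.

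Next I would apply Theorem~\ref{thm:generalized_weinberger}(1). Since $\bdry X = \emptyset$, the rel-$\bdry$ clauses are vacuous and the equivalence reads: $h$ is topologically $s$-bordant to a $\Z[\Gamma_0]$-split homotopy equivalence $h''': M''' \to X$ if and only if $h \times \id_Q$ is homotopic to a split homotopy equivalence along $X_0 \times Q$. Finally, I would promote ``$s$-bordant'' to ``homotopic'' on the manifold side. Given such an $s$-bordism $H: W \to X \times I$ from $h$ to $h'''$, the underlying $s$-cobordism $(W; M, M''')$ has $\pi_1(W) = \Gamma \in NDL$, so the Freedman--Quinn 4-dimensional topological $s$-cobordism theorem furnishes a product structure $W \cong M \times I$ rel $M$. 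Reparametrizing $H$ through this product structure produces a homotopy from $h$ to $h''' \circ \varphi$ for some homeomorphism $\varphi: M \to M'''$, and the composite is still $\Z[\Gamma_0]$-split (after a small ambient isotopy making $\varphi$ transverse to $(h''')^{-1}(X_0)$). The reverse implication is immediate, since a homotopy rel nothing is realized by a trivial $s$-bordism.

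The main obstacle I anticipate is the bookkeeping required to confirm that Theorem~\ref{thm:FTKQ} carries over to the relative pair setting $(X_i, X_0)$ with fundamental groupoid data and with the $L_5^h$-action that witnesses membership in the enhanced class $SES^h_+$ of Definition~\ref{defn:SESplus}; Freedman--Quinn surgery is typically phrased absolutely. Once the relative Freedman--Quinn sequence is in hand, the corollary falls out as a clean packaging of the main theorem with the topological $s$-cobordism theorem, with no further four-dimensional input.
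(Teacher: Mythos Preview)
Your proposal is correct and follows essentially the same route as the paper: verify that $(**) \Rightarrow (*)$ via closure of $NDL$ under subgroups plus the relative Freedman--Quinn surgery sequence, invoke Theorem~\ref{thm:generalized_weinberger}(1), and then collapse the resulting $s$-bordism to a homotopy using the $NDL$ $s$-cobordism theorem. Your anticipated obstacle is exactly the one the paper glosses over---it cites \cite{FreedmanTeichner, KQ} directly for the relative $SES^h_+$ property of the pairs $(X_i,X_0)$ rather than invoking the absolute Theorem~\ref{thm:FTKQ}, so no additional argument is supplied there either.
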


\begin{proof}
Since $\Gamma$ has class $NDL$, the subgroups $\Gamma_0, \Gamma_1, \Gamma_2$ have class $NDL$. Then, since $\Gamma_0, \Gamma_1, \Gamma_2$ have class $NDL$, by \cite{FreedmanTeichner, KQ}, the 4-manifold pairs $(X_i, X_0)$ have class $SES^h_+$. Hence Hypothesis $(**)$ implies Hypothesis $(*)$.
Now, since $\Gamma \in NDL$, by \cite{FreedmanTeichner, KQ}, the $\TOP$ $s$-cobordism of Theorem \ref{thm:generalized_weinberger}(1) is a product.
\end{proof}

\begin{rem}
Weinberger's theorem (Cor.~\ref{cor:weinberger}) \cite[Thm.~1]{Weinberger_fibering} was stated in a limited form. The only applicable situations were injective amalgamated products $\Gamma = \Gamma_1 *_{\Gamma_0} \Gamma_0 = \Gamma_1$ and $\Gamma = C_2 * C_2 = D_\infty$ in class $NDL$. (The second case was applied in \cite{JK_RP4, BDK}.) We effectively delete the last phrase in his proof. Earlier, there was a homology splitting result of M.~Freedman and L.~Taylor \cite{FreedmanTaylor} which required that $\Gamma = \Gamma_0 *_{\Gamma_0} \Gamma_0 = \Gamma_0$ but did not require that $\Gamma$ have class $NDL$.
\end{rem}

Next, we modify Weinberger's clever cobordism argument, adding a few details. We suppress the orientation characters $\omega$ needed in the non-orientable case.

\begin{proof}[Proof of Theorem~\ref{thm:generalized_weinberger}(1)]
For brevity, we sometimes denote ${}^Q$ for either $\times Q$ or $\times \id_Q$.

$(\Longrightarrow)$: Since $\dim(X^Q)=12 >4$, this follows from two high-dimensional facts.
By the $\TOP$ $s$-cobordism theorem \cite{Solopko}, the product of any 5-dimensional $s$-cobordism with $Q$ is homeomorphic to a product.
By the handlebody version of Quillen's plus construction (for example, see \cite[\S 11.2]{FQ}; the high-dimensional $\TOP$ version can be extracted from \cite[Annex 3, \S6--\S9]{KS}), the product of any 4-dimensional $\Z[\Gamma_0]$-split equivalence with $\id_Q$ can be exchanged along 2- and 3-handles in $M''' \times Q$ to become a split homotopy equivalence.

$(\Longleftarrow)$: Suppose $h^Q$ is homotopic to a homotopy equivalence split along $X_0^Q$. By $\TOP$ transversality \cite{FQ}, we may assume, up to homotopy rel $\bdry M$, that $h: M \to X$ is $\TOP$ transverse to $X_0$. There is an induced decomposition of compact manifolds $M = M_1 \cup_{M_0} M_2$, where for all $j=0,1,2$ the restrictions $h_j := h|M_j: M_j \to X_j$ are degree one $\TOP$ normal maps and $\bdry h_j$ are homeomorphisms.

Since $Q$ has signature equal to one, by the periodicity and product formulas \cite[\S8]{RanickiATS2}, for each $i=1,2$, the following relative surgery obstruction vanishes:
\[
\sigma_*(h_i,h_0) ~\cong~ \sigma_*(h_i,h_0) \otimes \sigma^*(Q) ~=~ \sigma_*(h_i^Q,h_0^Q) ~=~ 0 ~\in~ L_{12}^h(\Gamma,\Gamma_0).
\]
Then, by exactness at $\cN_\TOP$ in Hypothesis $(*)$, for each $i=1,2$, there exists a $\TOP$ normal bordism $(F_i,\bdry_0 F_i): (W_i,\bdry W_i) \to (X_i,X_0)$ from $(h_i,h_0): (M_i,M_0) \to (X_i,X_0)$ to a homotopy--homology equivalence $(h'_i,\bdry h'_i): (M'_i,\bdry M'_i) \to (X_i,X_0)$.
Note that the 3-manifolds $\bdry M'_1$ and $\bdry M'_2$ may not be homeomorphic.

We take three steps to construct an $s$-cobordism from $h$ to an $h'''$.
Figure~\ref{fig:Weinberger} illustrates the first step:
\begin{figure}[h!]\centering
\includegraphics[scale=0.50]{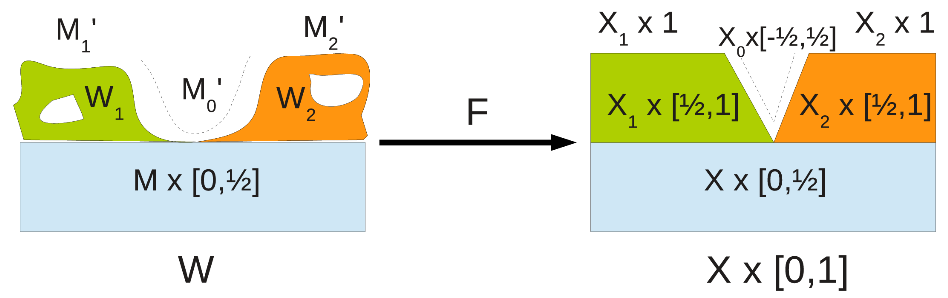}
\caption{relabeled version of Weinberger's \cite[Figure~1]{Weinberger_fibering}}
\label{fig:Weinberger}
\end{figure}

The precise, set-theoretic definitions are as follows:
\begin{eqnarray*}
F ~:=~ F_1 \cup_{h_1} (h \times \id_{[0,\frac{1}{2}]}) \cup_{h_2} F_2 ~,
& 
& X' ~:=~ (X_1 \times 1) \sqcup (X \times 0) \sqcup (X_2 \times 1)\\
W ~:=~ W_1 \cup_{M_1} (M \times {\textstyle[0,\frac{1}{2}]}) \cup_{M_2} W_2 ~,
&  M'_0 ~:=~ \bdry_0 W_1 \cup_{M_0} \bdry_0 W_2 ~,
& M' ~:=~ M'_1 \sqcup (M \times 0) \sqcup M'_2.
\end{eqnarray*}
Observe that $(F,\bdry_0 F): (W,M'_0) \to (X \times [0,1],X_0 \times [-\frac{1}{2},\frac{1}{2}])$ is a $\TOP$ normal map of manifold pairs, and that the restriction $\bdry_1 F: M' \to X'$ is a homotopy equivalence.

Next, the second step is to leech off surgery obstructions of the two halves of $F$ by attaching cobordisms.
Select a homotopy $H: M^Q \times [-1,0] \to X^Q$ to $h^Q$ from a homotopy equivalence $g = g_1 \cup_{g_0} g_2$ split along $X_0^Q$.
By $\TOP$ transversality \cite{FQ}, assume $H$ and $F^Q$ are transverse to $X_0^Q$.
Define $\TOP$ normal maps
\begin{eqnarray*}
G_0 ~:=~ H_0 \cup_{(h_0^Q \times 0)} (h_0^Q \times {\textstyle[0,\frac{1}{2}]}) ~,
& \quad & G_i ~:=~ H_i \cup_{(h_i^Q \times 0)} (h_i^Q \times {\textstyle[0,\frac{1}{2}]}) \cup_{(h_i^Q \times {\textstyle\frac{1}{2}})} F_i^Q ~.
\end{eqnarray*}
Note $H \cup_h F^Q = G_1 \cup_{G_0} G_2$.
Observe the restriction $\bdry_1 G_i = g_i \sqcup h_i'$ is a homotopy equivalence
and the complement $\bdry_0 G_i = G_0 \cup_{(h_0^Q \times {\textstyle{\frac{1}{2}}})} \bdry_0 F_i^Q$ is a normal map.
So there are defined surgery obstructions
\begin{eqnarray*}
x ~:=~ \sigma_*(F,\bdry_0 F) \in L_5^h(\Gamma,\Gamma_0) ~,
& \quad & x_i ~:=~ \sigma_*(G_i, \bdry_0 G_i) \in L_{13}^h(\Gamma_i,\Gamma_0).
\end{eqnarray*}
Denote the inclusion-induced homomorphism $j_i: L_5^h(\Gamma_i,\Gamma_0) \to L_5^h(\Gamma,\Gamma_0)$.
Then, by periodicity with $Q$, the cobordism invariance of surgery obstructions, and Wall's $\pi$-$\pi$ theorem \cite{Wall} (here, $L_*^h(\Gamma_0,\Gamma_0)=0$), we obtain:
\[
x ~\cong~ \sigma_*(F,\bdry_0 F) \otimes \sigma^*(Q) = \sigma_*(F^Q, \bdry_0 F^Q) ~=~ \sigma_*(H \cup_h F^Q, \bdry_0 F^Q) ~=~ j_1(x_1) + j_2(x_2).
\]
In particular, since $Q$ has Euler characteristic equal to one, we obtain that $x \in L_5^h(\Gamma,\Gamma_0)$ is the image of a surgery obstruction $x^B \in L_5^B(\Gamma,\Gamma_0)$ uniquely determined by $F^Q$, where the decoration subgroup is
\[
B ~:=~ j_1\Wh_1(\Gamma_1) + j_2\Wh_1(\Gamma_2) ~\subseteq~ \Wh_1(\Gamma).
\]

By existence of an $L_5^h$-action in Hypothesis $(*)$, for each $i=1,2$, there exists a $\TOP$ normal bordism $(F'_i,\bdry_0 F'_i): (W'_i,\bdry_0 W'_i) \to (X_i,X_0)$ from $(h'_i,\bdry h'_i)$ to $(h''_i, \bdry h''_i)$ with surgery obstruction $\sigma_*(F'_i,\bdry F'_i) = -x_i$ such that $(h''_i, \bdry h''_i): (M''_i, \bdry M''_i) \to (X_i, X_0)$ is a homotopy--homology equivalence.
Define:
\begin{eqnarray*}
F' ~:=~ F'_1 \cup_{h'_1} F \cup_{h'_2} F'_2 ~,
& \quad M''_0 ~:=~ \bdry_0 W'_1 \cup_{\bdry M'_1} M'_0 \cup_{\bdry M'_2} \bdry_0 W'_2 ~, \quad
& M'' ~:=~ M''_1 \sqcup (M \times 0) \sqcup M''_2.
\end{eqnarray*}
Observe $(F',\bdry_0 F'): (W',M_0'') \to (X \times [0,1], X_0 \times {\textstyle[-\frac{1}{2},\frac{1}{2}]})$ is a $\TOP$ normal map of pairs, and the complement $\bdry_1 F': M'' \to X'$ is a homotopy equivalence.
So there is defined a surgery obstruction which vanishes:
\[
\sigma_*(F',\bdry_0 F') ~=~ j_1(-x_1)+x+j_2(-x_2) ~=~ 0 ~\in~ L_5^B(\Gamma,\Gamma_0).
\]
Since the Null Disc Lemma holds for $\Gamma_0$, by 5-dimensional relative surgery \cite{Wall, FQ}, there is a normal bordism $G$ rel $M''$ to a $B$-torsion homotopy equivalence of pairs:
\[
(F'',\bdry_0 F'') : (W'',M'''_0) \longra (X \times [0,1], X_0 \times {\textstyle[-\frac{1}{2},\frac{1}{2}]}).
\]
In particular, $\bdry_1 F''=\bdry_1 F'$ restricts to a $\Z[\Gamma_0]$-homology equivalence $\bdry h'_1: \bdry M'_1 \to X_0$.
Hence $F''$ is a $B$-torsion $\TOP$ $h$-bordism from $h$ to a homotopy equivalence $\bdry_+ F'': \bdry_+W'' \to X$ $\Z[\Gamma_0]$-split along $X_0$.

Finally, the third step is to leech off the torsion obstructions of the two halves of the $h$-bordism $F''$.
Consider its Whitehead torsion
\[
t ~:=~ \tau(M \hookrightarrow W'') ~\in~ B.
\]
Then there exist $t_i \in \Wh(\Gamma_i)$ such that $t = j_1(t_1) + j_2(t_2)$.
By existence of a $\Wh_1$-action in Hypothesis $(*)$, for all $i=1,2$, there exist $h$-bordisms $F''_i \rel \bdry$ such that torsion of the domain $h$-cobordism is $j_i(-t_i)$. Therefore, by the sum formula, attaching these $h$-cobordisms to the top of $W''$ produces a $\TOP$ $s$-bordism $F''' := F''_1 \cup F'' \cup F''_2$ rel $\bdry M$ from $h$ to a homotopy equivalence $h''' := \bdry_+F'''$ $\Z[\Gamma_0]$-split along $X_0$.
\end{proof}

\begin{proof}[Proof of Theorem~\ref{thm:generalized_weinberger}(2)]
The argument in the stable case, Part (2), is similar to the unstable case, Part (1). The places where we used the hypothesis that $(X_1, X_0)$ and $(X_2, X_0)$ have class $SES^h_+$ can be replaced with the use of Theorem~\ref{thm:CappellShaneson}. Moreover, the places where we used the hypothesis that $\Gamma_0$ has class $NDL$ had target $X_0 \times I$ for surgery problems, and so can be replaced with the use of Theorem~\ref{thm:CappellShaneson}.

Realization of elements of the Whitehead group by $h$-cobordisms on any given compact 4-manifold is the same as in high dimensions \cite[p.~90]{RS}. Finally, by \cite[Theorem~9.1]{FQ}, any $\TOP$ $s$-cobordism $W'''$ on a compact 4-manifold admits a $\TOP$ handlebody structure. Then we proceed as in the proof of the high-dimensional $s$-cobordism theorem (e.g., see \cite[Thm.~6.19]{RS}), except we resolve double-point singularities of immersed Whitney 2-discs via Norman tricks \cite[Lem.~1]{Norman}. We conclude, for some $r \geq 0$, that the sum stabilization $W''' \natural r(S^2 \times S^2 \times I)$ (defined on \cite[p.~107]{FQ}) is homeomorphic to the product $(X \# r(S^2 \times S^2)) \times I$.
\end{proof}

\section{Proofs for the surgery sequence}\label{sec:main_proofs}

Again, we suppress the orientation characters $\omega$ used in the non-orientable case.
We start with a puncturing lemma. Section~\ref{sec:Weinberger} contains the terminology for pairs.

\begin{lem}\label{lem:puncture}
Let $Z$ be a non-empty compact connected topological 4-manifold. Write $pZ := Z - \intr\,D^4$. If $Z$ has class $SES^h_+$, then $(pZ, S^3)$ has class $SES^h_+$.
\end{lem}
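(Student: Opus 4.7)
The plan is to transfer the $SES^h_+$ class from $Z$ to the pair $(pZ, S^3)$ via a natural correspondence between their surgery sequences, induced by the mutually inverse operations of puncturing $Z$ and Freedman-capping a boundary homology $3$-sphere. Set $\pi := \pi_1(Z) = \pi_1(pZ)$ (puncturing is $\pi_1$-neutral in dimension $\geq 3$) and $\omega := w_1(\tau_Z)$; note $\pi_1(S^3) = 1$. The relevant $L$-theoretic inputs come from the Rothenberg-type long exact sequence $\cdots \to L_n(1) \to L_n^h(\pi, \omega) \to L_n^h(\pi, 1, \omega) \to L_{n-1}(1) \to \cdots$. Using $L_5(1) = 0$ together with the fact that the signature augmentation splits the injection $L_4(1) = \Z \hookrightarrow L_4^h(\pi, \omega)$, I deduce $L_5^h(\pi, 1, \omega) \cong L_5^h(\pi, \omega)$ and a natural surjection $L_4^h(\pi, \omega) \twoheadrightarrow L_4^h(\pi, 1, \omega)$ with kernel the signature summand.

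Next, construct the puncture and cap operations on structure sets. Given $h: M \to Z$ in $\cS^h(Z)$, make it transverse to a disc $D^4 \subset \intr Z$, and after pair-cancellation of opposite-signed transverse preimages (always achievable by joining them with an arc, a $4$-dimensional Whitney move on $0$-submanifolds), arrange $h^{-1}(D^4)$ to be a single disc on which $h$ restricts to a homeomorphism; its restriction to the complement gives an element of $\cS^h(pZ, S^3)$ whose boundary is a homeomorphism. Conversely, given $h: (N, \Sigma) \to (pZ, S^3)$, invoke Freedman's contractible $4$-manifold theorem to attach a (unique up to homeomorphism) compact contractible topological cap $C_\Sigma$ along $\Sigma$, extending $h$ over any map $C_\Sigma \to D^4$ to obtain $\tilde h: N \cup_\Sigma C_\Sigma \to pZ \cup_{S^3} D^4 = Z$. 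This $\tilde h$ is a homotopy equivalence by a combination of van Kampen---since the image of $\pi_1(\Sigma) \to \pi_1(N) \cong \pi$ factors through $\pi_1(S^3) = 1$ and so is trivial---and Mayer--Vietoris in $\Z[\pi]$-coefficients.

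Assemble the resulting commutative diagram of the two surgery sequences and perform a diagram chase. Exactness of the $(pZ, S^3)$-sequence at $L_5^h(\pi, 1, \omega)$ and at $\cS^h(pZ, S^3)$ follows directly from the corresponding exactness for $Z$ via the bijections on the $L_5$ and $\cS^h$ columns. The principal obstacle is exactness at $\cN(pZ, S^3)$: the image of $\cS^h(pZ, S^3)$ corresponds, via the cap construction, to the image of $\cS^h(Z) \to \cN(Z)$---namely the kernel of $\sigma_4^h: \cN(Z) \to L_4^h(\pi, \omega)$---whereas the kernel of $\cN(pZ, S^3) \to L_4^h(\pi, 1, \omega)$ is a priori larger by the signature summand $\Z = L_4(1)$. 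One must show this discrepancy is absorbed by the freedom to choose the boundary homology $3$-sphere: given a lift of the relative obstruction lying in the $L_4(1)$-summand, modify the homotopy-homology equivalence by boundary-connect-summing $\Sigma$ with an appropriate Poincar\'e-type homology $3$-sphere and re-capping via Freedman, realizing a signature-correcting change of lift that brings the absolute obstruction to zero.

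Finally, the $\Wh_1(\pi)$- and $L_5^h$-actions on $\cS^h(Z)$ are transferred through the correspondence, with realization of Whitehead torsions by $h$-cobordisms on $4$-manifolds provided by \cite[Theorem~9.1]{FQ} and extended across the Freedman cap; the $L_5^h$-action transfers using the isomorphism $L_5^h(\pi, \omega) \cong L_5^h(\pi, 1, \omega)$.
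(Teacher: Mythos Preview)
Your overall strategy---establishing a bijection $p:\cS_\TOP^h(Z)\to\cS_\TOP^h(pZ,S^3)$ via puncturing and Freedman-capping, then transporting the $SES^h_+$ structure across it---is exactly the paper's approach. Two concrete problems remain.

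\textbf{The $L$-theory step is wrong in the non-orientable case.} You assert that $\varepsilon:L_4(1)=\Z\hookrightarrow L_4^h(\pi,\omega)$ is a split injection and hence that $L_5^h(\pi,1,\omega)\cong L_5^h(\pi,\omega)$. When $\omega\neq 1$ this fails: for instance with $\pi=C_2$ and the nontrivial $\omega$ one has $L_4^h(C_2,-)\cong\Z/2$, so $\Ker(\varepsilon)=2\Z$. In general Wall's sequence gives
\[
0\longrightarrow L_5^h(\pi,\omega)\longrightarrow L_5^h(\pi,1,\omega)\longrightarrow \Ker(\varepsilon)=k\Z\longrightarrow 0
\]
with $k=0$ precisely when $Z$ is orientable. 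The paper does not ignore this extra $k\Z$: it chooses a (non-canonical) splitting $L_5^h(\pi,1,\omega)\cong L_5^h(\pi,\omega)\oplus k\Z$ and defines the action of the $k\Z$ summand on $\cS_\TOP^h(pZ,S^3)$ by attaching twice-punctured $E_8$-manifolds along the $S^3$ boundary. Without this, your transported $L_5^h(\pi,1,\omega)$-action is not defined in the non-orientable case.

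\textbf{Exactness at $\cN_\TOP(pZ,S^3)$ is not established by your fix.} At that point you have only a normal map, not a ``homotopy--homology equivalence'' to modify. More seriously, connected-summing $\Sigma$ with a Poincar\'e-type homology sphere and then \emph{re-capping via Freedman} changes nothing: every Freedman cap is contractible and has signature zero, so the absolute obstruction of the capped problem is unaffected by which homology $3$-sphere sits on the boundary before capping. The paper's argument is: first cap $\Sigma$ over $D^4$ using \cite[Cor.~9.3C]{FQ} to obtain a closed normal map $f':M'\to Z$ whose obstruction lies in $\Img(\varepsilon)\subseteq L_4^h(\pi,\omega)$; then connected-sum $M'$ in the interior with copies of the closed topological $E_8$-manifold (or its reverse) to force $\sigma_4^h(f')=0$; only then invoke $SES^h_+$ for $Z$ and puncture the resulting homotopy equivalence. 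That interior $E_8$ connected sum is the missing mechanism in your sketch.
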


\begin{proof}
Denote the fundamental group $\Gamma := \pi_1(Z)$. First, let $(M,\bdry_0 M)$ be a compact topological 4-manifold pair, and let $(f,\bdry_0 f): (M,\bdry_0 M) \to (pZ,S^3)$ be a degree-one $\TOP$ normal map of pairs that restricts to a homeomorphism $\bdry_1 f: \bdry_1 M \to \bdry Z$. Suppose the relative surgery obstruction vanishes: $\sigma_4^h(f)=0 \in L_4^h(\Gamma,1)$. Recall the geometric exact sequence of C.T.C.~Wall \cite[Cor.~3.1.1]{Wall}:
\[
\Z = L_4^h(1) \xra{~\varepsilon~} L_4^h(\Gamma) \longra L_4^h(\Gamma,1) \longra L_3^h(1) = 0.
\]
Then $\bdry_0 f: \bdry_0 M \to S^3$ is $\DIFF$ normally bordant to a $\Z$-homology equivalence $g: \Sigma \to S^3$. Since any closed oriented 3-manifold $\Sigma$ is parallelizable, by a theorem of M.~Freedman \cite[Cor.~9.3C]{FQ}, it follows there exists a $\TOP$ normal null-bordism of $g$ over $D^4$. Thus $(f,\bdry_0 f)$ is $\TOP$ normally bordant, as a pair relative to $\bdry_1 M$, to a degree-one map $f': M' \to Z$ such that $\bdry f': \bdry_1 M \to \bdry Z$ is a homeomorphism. Moreover, by connecting sum with copies of the $\TOP$ $E_8$-manifold or its reverse, we may assume that the absolute surgery obstruction vanishes: $\sigma_4^h(f') = 0 \in L_4^h(\Gamma)$. By hypothesis, $f'$ is $\TOP$ normally bordant to a homotopy equivalence $h: M'' \to Z$. We may assume that $h$ is transverse to a point $z \in Z$ and that $h^{-1}\{z\}$ is a singleton. Thus $(f,\bdry_0 f)$ is normally bordant to a homotopy equivalence $(ph,\id): (pM'',S^3) \to (pZ,S^3)$.
Therefore we obtain exactness at the normal invariants $\cN_\TOP(pZ,S^3)$.

Next, define an appropriate action of $L_5^h(\Gamma,1)$ on $\cS_\TOP^h(pZ,S^3)$ as follows. By puncturing at a transversal singleton $\{z\} \subset Z$ with connected preimage, we obtain a function $p: \cS_\TOP^h(Z) \to\cS_\TOP^h(pZ,S^3)$. By the existence of 1-connected $\TOP$ $h$-cobordism from a homology 3-sphere $\Sigma$ to the genuine one \cite[Cor.~9.3C]{FQ}, it follows that $p$ is surjective. By the topological plus construction \cite[Thm.~11.1A]{FQ}, applied to any homology $h$-cobordism of $S^3$ to itself, it follows that $p$ is injective. By hypothesis, there is an appropriate action of $L_5^h(\Gamma)$ on $\cS_\TOP^h(Z)$. This extends, via the bijection $p$, to an action of $L_5^h(\Gamma)$ on $\cS_\TOP^h(pZ,S^3)$. For any orientation character $\omega$, there is a unique $k \geq 0$ such that Wall's exact sequence becomes
\[
0 \longra L_5^h(\Gamma) \xra{~\iota~} L_5^h(\Gamma,1) \longra k\Z = \Ker(\varepsilon) \longra 0.
\]
(Here $k=0$ if and only if $\omega=1$, equivalently, $Z$ is orientable.) Since these groups are abelian, we obtain a non-canonical isomorphism
\[
\varphi: L_5^h(\Gamma,1) \longra L_5^h(\Gamma) \oplus k\Z.
\]
The relevant action of $L_4^h(1)$ on the homology structure set $\cS_\TOP^{h\Z}(S^3)$ via twice-punctured $E_8$-manifolds restricts/extends to an action of $k\Z$ on $\cS_\TOP^h(pZ,S^3)$. Thus, via the isomorphism $\varphi$, we obtain an appropriate action of $L_5^h(\Gamma,1)$, given by concatenation of the actions. Therefore, we obtain $(pZ,S^3)$ has class $SES^h_+$.
\end{proof}

At last, we are ready to establish our main theorem using homology splitting.
For any non-empty compact connected 4-manifold $Z$, we use the following notation:
\begin{eqnarray*}
pZ := Z - \mathrm{int}\,D^4 ~,
& \widetilde{\cN}_\TOP(Z) := \Ker\left(\cN_\TOP(Z) \longra L_4^h(1)\right) ~,
& \widetilde{L}^h_4(\pi_1 Z) := \Cok\left(L^h_4(1) \longra  L^h_4(\pi_1 Z)\right).
\end{eqnarray*}

\begin{proof}[Proof of Theorem~\ref{thm:main}]
Since $\Gamma := \pi_1(X)$ is isomorphic to a free product $\Gamma_1 * \cdots * \Gamma_n$, by an existence theorem of J.~Hillman \cite{HillmanFreeProduct} (cf.~\cite{KLT1, KurMat}), there exist $r \geq 0$ and closed topological 4-manifolds $X_1,\ldots,X_n$ with each $\pi_1(X_i)$ isomorphic to $\Gamma_i$ such that $X \# r(S^2 \times S^2)$ is homeomorphic to $X_1 \# \cdots \# X_n$.  For Part~(1), since each $\Gamma_i$ has class $NDL$, by Theorem~\ref{thm:FTKQ}, we obtain that each $X_i$ has class $SES^h_+$. For Part~(2), this is assumed of the $X_i$, and the $SES^h_+$ property only depends on the homotopy type of $X$. Therefore we may assume that $X = X_1 \# \cdots \# X_n$ with each $X_i$ of class $SES^h_+$. Write $\Gamma_i := \pi_1(X_i)$ for each fundamental group.

We induct on $n > 0$. Assume for some $n \geq 1$ that the $(n-1)$-fold connected sum of all compact connected topological 4-manifolds of class $SES^h_+$ has class $SES^h_+$, where in the non-orientable case we assume 2-torsionfree fundamental group.  Write
\[
X' := X_1 \# \cdots \# X_{n-1}~, \qquad \Gamma' := \Gamma_1 * \cdots * \Gamma_{n-1}.
\]
Hence $X = X' \# X_n$ and $\Gamma = \Gamma' * \Gamma_n$. By hypothesis, both $X'$ and $X_n$ have class $SES^h_+$. Then, by Lemma~\ref{lem:puncture}, the pairs $(pX',S^3)$ and $(pX_n,S^3)$ have class $SES^h_+$. Next, we show our original 4-manifold has class $SES^h_+$:
\[
X = pX' \cup_{S^3} pX_n.
\]

First, the $K$-theory splitting obstruction group vanishes \cite{Waldhausen_Rings}, and, by a recent vanishing result \cite{CR, BL_CAT0, CD2}, so do the $L$-theory obstruction groups:\footnote{If $\Gamma$ is 2-torsionfree, then $\UNil_*^h=0$ by Cappell's earlier result \cite{Cappell_unitary} \cite[Lem.~II.10]{Cappell_split}. Furthermore, we require $\Gamma$ to be 2-torsionfree in the non-orientable case, due to the example of non-vanishing of these two $\UNil$-groups for $\RP^4 \# \RP^4$.}
\begin{eqnarray*}
\widetilde{\Nil}_0(\Z;\Z[\Gamma'-1],\Z[\Gamma_n-1]) = 0,
& \UNil_4^h(\Z;\Z[\Gamma'-1],\Z[\Gamma_n-1]) = 0,
& \UNil_5^h(\Z;\Z[\Gamma'-1],\Z[\Gamma_n-1]) = 0.
\end{eqnarray*}
So observe, by Stalling's theorem for Whitehead groups of free products \cite{Stallings} and the Mayer--Vietoris type exact sequence for $L$-theory groups \cite{Cappell_unitary}, that:
\begin{eqnarray*}
\Wh_1(\Gamma) = \Wh_1(\Gamma') \oplus \Wh_1(\Gamma_n) ~,
& \enspace \widetilde{L}_4^h(\Gamma) = \widetilde{L}_4^h(\Gamma') \oplus \widetilde{L}_4^h(\Gamma_n) ~,
& \enspace \widetilde{L}_5^h(\Gamma) = \widetilde{L}_5^h(\Gamma') \oplus \widetilde{L}_5^h(\Gamma_n) ~.
\end{eqnarray*}
Here, from the Mayer--Vietoris sequence for any free product $G = G_1 * G_2$, we write
\begin{eqnarray*}
\widetilde{L}^h_5(G) &:=& \Ker\left(\bdry: L^h_5(G) \longra L_4^h(1) \right).
\end{eqnarray*}

Second, since $\cN_\TOP(S^3)$ and $\widetilde{\cN}_\TOP(S^3 \times I)$ are singletons, by $\TOP$ transversality \cite{FQ} and by attaching thickened normal bordisms, we obtain:
\begin{eqnarray*}
\widetilde{\cN}_\TOP(X) &=& \widetilde{\cN}_\TOP(X') \times \widetilde{\cN}_\TOP(X_n).
\end{eqnarray*}
So, since the surgery sequence for both $X'$ and $X_n$ is exact at $\cN_\TOP$, the surgery sequence for the connected sum $X$ is exact at $\cN_\TOP$.

Third, since $(pX', S^3)$ and $(pX_n, S^3)$ have class $SES^h_+$ and the splitting obstruction groups vanish, by Theorem~\ref{thm:generalized_weinberger}(1), any homotopy equivalence to $X$ is $\TOP$ $s$-bordant rel $\bdry M$ to a $\Z$-homology split map along $S^3$. That is, the top part of the $s$-bordism is a homotopy equivalence whose preimage of $S^3$ is a $\Z$-homology 3-sphere $\Sigma$. Thus the following inclusion is an equality (compare \cite[Thm.~3]{Cappell_free}):
\[
\subseteq~:~ \cS_\TOP^{\text{$\Z$-split}}(X;S^3) \longra \cS_\TOP^h(X).
\]
By \cite[Corollary~9.3C]{FQ}, there exists a $\TOP$ $\Z$-homology $h$-cobordism $(W;\Sigma,S^3)$ such that $W$ is 1-connected. Furthermore, there exists an extension of the degree one normal map $\Sigma \to S^3$ to a degree one normal map $W \to S^3 \times I$. Thus, by attaching the thickened normal bordism, the following inclusion is an equality:
\[
\subseteq~:~ \cS_\TOP^{\text{split}}(X;S^3) \longra \cS_\TOP^{\text{$\Z$-split}}(X;S^3).
\]
(The process of this last equality is called \emph{neck exchange}, cf.~\cite{KLT2, JK_RP4}.)
Therefore the following map $\#$, given by interior connected sum, is surjective:
\[
\#~:~ \cS_\TOP^h(X') \times \cS_\TOP^h(X_n) \longra \cS_\TOP^h(X).
\]

In order to show that $\#$ is injective, suppose $h_1 \# h_2$ is $\TOP$ $h$-bordant to $h'_1 \# h'_2$, say by a map $H: W \to X \times I$. Since $S^3 \times I$ is a 1-connected 4-manifold \cite{FQ}, and $\bdry H$ is split along $S^3 \times \bdry I$, by the relative 5-dimensional form of Cappell's nilpotent normal cobordism construction \cite{Cappell_free, Cappell_split}, there exists a $\TOP$ normal bordism rel $\bdry W$ from $H$ to an $h$-bordism $H': W' \to X \times I$ split along $S^3 \times I$. So $H' = H'_1 \# H'_2$. Therefore $\#$ is injective.
Now $\Wh_1(\Gamma)$ and $\widetilde{L}_5^h(\Gamma)$ can be given product actions on $\cS_\TOP^h(X)$. The latter extends to an action of $L_5^h(\Gamma)$ by attaching a thickened multiple of a twice-punctured $E_8$ manifold along $S^3$.
Hence the surgery sequence for $X$ is exact at $\cS^h_\TOP$ and $L_5^h$. This completes the induction. Therefore arbitrary connected sums $X=X_1 \# \cdots \# X_n$ have class $SES^h_+$.
\end{proof}

The following argument is partly based on Farrell's 1970 ICM address \cite{Farrell_ICM1970}.

\begin{proof}[Proof of Theorem~\ref{thm:fibering}]
One repeats the mapping torus argument of the proof of \cite[Theorem~5.6]{Khan_fibering}, constructing a homotopy equivalence $h: X \to X$ using $f$. Since the achieved homotopy equivalence $g: M \to X \rtimes_h S^1$ has Whitehead torsion $\tau(g) = \tau(f) = 0$, there are no splitting obstructions. Since $X$ has class $SES^h_+$, the proof of splitting $g$ along $X$ holds \cite[Thm.~5.4]{Khan_fibering}; one no longer requires that $M$ and $X$ be $\DIFF$ manifolds. Therefore the argument of \cite[Theorem~5.6]{Khan_fibering} shows that $f: M \to S^1$ is homotopic to a $\TOP$ $s$-block bundle projection.
\end{proof}

\section{Proofs for topological rigidity}\label{sec:rigidity_proofs}

The following elementary argument is similar to J.~Hillman's \cite[Cor.~6.7.2]{HillmanBook}.

\begin{proof}[Proof of Theorem~\ref{thm:rigidity}]
First, we show that the $s$-cobordism structure set $\cS_\TOP^s(Z)$ is a singleton.  Let $M$ be a compact topological 4-manifold, and let $h: M \to Z$ be a simple homotopy equivalence that restricts to a homeomorphism $\bdry h: \bdry M \to \bdry Z$. Then the surgery obstruction $\sigma_4^s(\eta(h)) \in L_4^s(\pi,\omega)$ vanishes. Since $\sigma_4^s$ is injective, there exists a $\TOP$ normal bordism $F: W \to Z \times I$ to $\eta(h)$ from the identity $\id_Z$. Since $\sigma_5^s$ is surjective, there exists a $\TOP$ normal bordism $F': W' \to Z \times I$ to $\id_Z$ from $\id_Z$ with opposite surgery obstruction: $\sigma_5^s(F') = -\sigma_5^s(F)$. Hence the union
\[
F'' ~:=~ F' \cup_{\id_Z} F ~:~ W' \cup_Z W  \longra Z \times I
\]
is a $\TOP$ normal bordism to $\eta(h)$ from $\id_Z$ with vanishing surgery obstruction: $\sigma_5^s(F'')=0$. Therefore, by 5-dimensional $\TOP$ surgery theory \cite{Wall, KS}, we obtain that $F''$ is $\TOP$ normally bordant $\!\!\rel \bdry$ to a simple homotopy equivalence $F''': (W''';Z,M) \to (Z \times I; Z\times 0, Z \times 1)$ of manifold triads. Therefore we have found a $\TOP$ $s$-bordism to $h$ from $\id_Z$. That is, $\cS_\TOP^s(Z)$ is a singleton $\{*\}$.

Next, observe that trivially we obtain an exact sequence of based sets:
\[
\cN_\TOP(Z \times I) \xrightarrow{~\sigma_5^s~} L_5^s(\pi,\omega) \xrightarrow{~\bdry~} \{*\} \xrightarrow{~\eta~} \cN_\TOP(Z) \xrightarrow{~\sigma_4^s~} L_4^s(\pi,\omega).
\]
We declare the action of $L_5^s(\pi,\omega)$ on $\cS_\TOP^s(Z)$ to be trivial.
Finally, if $\Wh_1(\pi)=0$, then homotopy equivalences to $Z$ are simple, and so $Z$ is topologically $s$-rigid.
\end{proof}

We employ a case of a lemma of Hillman \cite[Lem.~6.8]{HillmanBook}, providing its details.

\begin{proof}[Proof of Corollary~\ref{cor:unstable_rigidity}]
Let $k \geq 0$. By the Mayer--Vietoris sequence in homology, the Shaneson sequence in $L$-theory \cite{ShanesonSequence}, and the Ranicki assembly map \cite[p.~148]{RanickiTOP}, the following diagram commutes with right-split exact rows:
\[\begin{CD}
H_{5+k}(Z;\bL_0) @>{i_*}>> H_{5+k}(Z \times S^1;\bL_0) @>{\bdry}>> H_{4+k}(Z;\bL_0)\\
@VV{A_{5+k}^s(Z)}V @VV{A_{5+k}^s(Z \times S^1)}V @VV{A_{4+k}^h(Z)}V\\
L_{5+k}^s(Z) @>{i_*}>> L_{5+k}^s(Z \times S^1) @>{\bdry}>> L_{4+k}^h(Z).
\end{CD}\]
Moreover, the algebraic right-splitting is given by multiplying local or global quadratic complexes by the symmetric complex of the circle. This choice of splitting commutes with the connective assembly maps $A_{5+k}^s(Z \times S^1)$ and $A_{4+k}^h(Z)$.

Assume $Z \times S^1$ is topologically rigid.  Then $\cS_\TOP^s(Z \times S^1)=\{*\}$. So, by Wall's surgery exact sequence \cite[\S10]{Wall} and Ranicki's identification of the surgery obstruction map with the assembly map \cite[Prop.~18.3(1)]{RanickiTOP} via topological transversality \cite{FQ}, we obtain that $A_{5}^s(Z \times S^1)$ is injective and $A_{6}^s(Z \times S^1)$ is surjective. Hence, using $k=0$ in the above diagram and the right-splitting, $\sigma_4^h = A_4^h(Z)$ is injective. Also, using $k=1$ in the above diagram, $\sigma_{5}^h = A_{5}^h(Z)$ is surjective. Therefore, by Theorem~\ref{thm:rigidity}, we obtain that $\cS_\TOP^h(Z)=\{*\}$. Hence, since $\Wh_1(\pi_1 Z)=0$ by hypothesis, we conclude that $Z$ is topologically $s$-rigid.
\end{proof}

\begin{proof}[Proof of Corollary~\ref{cor:algebraic_rigidity}]
Denote $\Gamma := \pi_1(Z)$. Via topological transversality \cite{FQ}, there are commutative squares with bijective left vertical maps \cite[Prop.~18.3(1)]{RanickiTOP}:
\[\begin{CD}
\cN_\TOP(Z) @>{\sigma_4^s}>> L_4^s(\Gamma)\\
@VV{\cap [Z]_{\bL^0}}V @A{A_4^\Gamma}AA\\
H_4(Z;\bL_0) @>{u_4}>> H_4(B\Gamma;\bL_0)
\end{CD}
\qquad
\begin{CD}
\cN_\TOP(Z \times I) @>{\sigma_5^s}>> L_5^s(\Gamma)\\
@VV{\cap [Z]_{\bL^0}}V @A{A_5^\Gamma}AA\\
H_5(Z;\bL_0) @>{u_5}>> H_5(B\Gamma;\bL_0)
\end{CD}
\]
Here, we are using the identification $\cN_\TOP(Z)=[Z/\bdry Z,G/TOP]_+$. Since $Z$ is aspherical, the bottom horizontal maps are isomorphisms. Since $\Gamma$ is torsionfree with $\cdim(\Gamma)=4$ and has class $FJ_L$, $\Wh_1(\Gamma)=0$, the map $A_4^\Gamma$ is a monomorphism, and $A_5^\Gamma$ is an isomorphism. Hence $\sigma_4^s$ is injective and $\sigma_4^s$ is surjective. Therefore, by Theorem~\ref{thm:rigidity}, we obtain that $Z$ is topologically $s$-rigid and has class $SES^h_+$.
\end{proof}

\begin{proof}[Proof of Corollary~\ref{cor:mappingtorus}]
Let $\alpha: K \to K$ be the homeomorphism.
It follows from the homotopy sequence of a fibration that $Z = K \rtimes_\alpha S^1$ is aspherical. By a recent theorem\footnote{Their proof depends on G.~Perelman's affirmation of Thurston's Geometrization Conjecture (cf.~\cite{AndersonMT_Survey}).  It also depends on individual casework of S.~Roushon and P.~K\"uhl.} of Bartels--Farrell--L\"uck \cite{BFL_lattice}, we obtain that $\Gamma_0 := \pi_1(K)$ has class $FJ_L$.

Write $\Gamma := \pi_1(Z)$. Then $\Gamma = \Gamma_0 \rtimes_\alpha \Z$. By the excisive Wang sequence and the Shaneson Wang-type sequence, there is a commutative diagram with exact rows:
\[\begin{CD}
H_n(B\Gamma_0;\bL) @>{1-\alpha_*}>> H_n(B\Gamma_0; \bL) @>{i_*}>> H_n(B\Gamma; \bL) @>{\bdry}>> H_{n-1}(B\Gamma_0;\bL)\\
@VV{A_n^{\Gamma_0}}V @VV{A_n^{\Gamma_0}}V  @VV{A_n^{\Gamma}}V @VV{A_{n-1}^{\Gamma_0}}V\\
L_n^{s=\infdec}(\Gamma_0) @>{1-\alpha_*}>> L_n^s(\Gamma_0) @>{i_*}>> L_n^s(\Gamma) @>{\bdry}>> L_{n-1}^{h=s}(\Gamma_0)
\end{CD}\]
Since $\Gamma_0$ is torsionfree and has class $FJ_L$, the non-connective assembly maps $A_*^{\Gamma_0}$ are isomorphisms. Hence, by the five lemma, the non-connective assembly maps $A_*^\Gamma$ are isomorphisms. Using topological tranversality and Poincar\'e duality, similar to the proof of Corollary~\ref{cor:algebraic_rigidity}, by Theorem~\ref{thm:rigidity}, we obtain that $\cS_\TOP^s(Z) = \{*\}$.
Hence, since $\Wh_1(\pi_1 Z)=0$, we conclude that $Z$ is topologically $s$-rigid.
\end{proof}

\begin{proof}[Proof of Corollary~\ref{cor:connectsum_vanishingsecondhomotopy}]
Since each $X_i$ is orientable and has class $SES^h_+$, by Theorem~\ref{thm:main}, we obtain that $X$ has class $SES^h_+$ and the following function is a bijection:
\[
\#~:~ \prod_{i=1}^n \cS_\TOP^h(X_i) \longra \cS_\TOP^h(X).
\]
Next, let $1 \leq i \leq n$.  Consider the connective assembly map components \cite{TW}:
\begin{eqnarray*}
A_4 = \left(\begin{smallmatrix}I_0 & \kappa_2\end{smallmatrix}\right) &:&
H_4(B\Gamma_i; \bL_0) = H_0(B\Gamma_i;\Z) \oplus H_2(B\Gamma_i;\Z_2) \longra L_4^h(\Gamma_i)\\
A_5 = \left(\begin{smallmatrix}I_1 & \kappa_3\end{smallmatrix}\right) &:&
H_5(B\Gamma_i; \bL_0) = H_1(B\Gamma_i;\Z) \oplus H_3(B\Gamma_i;\Z_2) \longra L_5^h(\Gamma_i).
\end{eqnarray*}
Assume $\Gamma_i$ is torsionfree and $\pi_2(X_i)\otimes \Z_2 = 0$. Since $\Gamma_i$ has class $FJ_L$ and $\cdim(\Gamma_i) \leq 4$, we obtain that $A_4$ is a monomorphism and $A_5$ is an isomorphism.
Recall the universal covering $\widetilde{X}_i \to X_i$ is classified by a unique homotopy class of map $u: X_i \to B\Gamma_i$, which induces an isomorphism on fundamental groups.
Since $X_i$ is a closed oriented topological manifold, using topological transversality \cite{FQ}, the Quinn--Ranicki $H$-space structure on $G/TOP$, and Poincar\'e duality with respect to the $\bL^0$-orientation \cite{RanickiTOP}, we obtain induced homomorphisms
\begin{eqnarray*}
u_4' &:& \cN_\TOP(X_i) \cong [(X_i)_+, G/TOP]_+ \cong H_4(X_i;\bL_0) \xra{~u_*~} H_4(B\Gamma_i;\bL_0)\\
u_5' &:& \cN_\TOP(X_i \times I) \cong [(X_i)_+ \wedge S_1,G/TOP]_+ \cong H_5(X_i;\bL_0) \xra{~u_*~} H_5(B\Gamma_i;\bL_0)
\end{eqnarray*}
such that the surgery obstruction map factors: $\sigma_4^h  = A_4 \circ u_4'$ and $\sigma_5^h = A_5 \circ u_5'$. Recall the Hopf sequence, which is obtained from the Leray--Serre spectral sequence:
\[
H_3(X_i;\Z_2) \xra{u_3} H_3(B\Gamma_i;\Z_2) \longra H_2(\widetilde{X};\Z_2) \longra H_2(X;\Z_2) \xra{u_2} H_2(B\Gamma_i;\Z_2) \longra 0.
\]
Since $H_2(\widetilde{X};\Z_2) = \pi_2(X_i) \otimes \Z_2=0$, we have $\Ker(u_2)=0$ and $\Cok(u_3)=0$.
Hence
\begin{eqnarray*}
\Ker(\sigma_4^h) &=& \Ker(u_4') = \Ker(u_0) \oplus \Ker(u_2) = 0\\
\Cok(\sigma_5^h) &=& \Cok(u_5') = \Cok(u_1) \oplus \Cok(u_3) = 0.                                                                                                                                                                                                                                                                                                                                                                                                                                                \end{eqnarray*}
Therefore, since $X_i$ has class $SES^h_+$ and $\Wh_1(\Gamma_i)=0$, we obtain that $\cS_\TOP^s(X_i)$ is a singleton. Thus, since $\#$ is a bijection, the Whitehead group $\Wh_1(\Gamma)$ and $s$-cobordism structure set $\cS_\TOP^s(X)$ of $X = X_1 \# \cdots \# X_n$ are singletons also.
\end{proof}


\newpage
\bibliographystyle{elsarticle-harv}
\bibliography{ConnectedSums4Manifolds}

\end{document}